\DeclareMathOperator{\dcdot}{\cdot\cdot}
\DeclareMathOperator{\Seq}{\subseteq}
\newtheorem{theorem}{Theorem}[section]
\newtheorem{lemma}[theorem]{Lemma}
\newtheorem{corollary}[theorem]{Corollary}
\newtheorem{prop}[theorem]{Proposition}
\theoremstyle{theorem}
\newtheorem{thmx}{Theorem}
\newcommand{\brac}[1]{\left\lbrace #1 \right\rbrace}
\newcommand{\card}[1]{\left| #1 \right|}
\newcommand{\Case}[2]{\noindent {\bf Case #1:} \emph{#2}}
\newcommand{\Subcase}[2]{\noindent {\bf Subcase #1:} \emph{#2}}
\theoremstyle{definition}
\newtheorem{defn}{Definition}[section]
\theoremstyle{remark}
\newtheorem*{remark}{Remark}
\theoremstyle{definition}
\newtheorem*{notation}{Notation}
\theoremstyle{theorem}
\newcounter{claimCount}
\newenvironment{claim}{\medskip

    \noindent\refstepcounter{claimCount}\textbf{Claim~\arabic{claimCount}.}}{

    \medskip}
\newenvironment{claimproof}{\noindent\textit{Proof of Claim~\arabic{claimCount}.}}{\hfill\ensuremath{\qedsymbol} \normalsize{Claim~\arabic{claimCount}}

    \medskip}
\begin{document}

\title{On the structure of (claw, bull)-free graphs}

\author{\large{Sebasti\'an Gonz\'alez Hermosillo de la Maza}\\
Department of Mathematics\\
Simon Fraser University\\
Burnaby, BC, Canada\\
{\texttt{sga89@sfu.ca}}
\and
\large{Yifan Jing}\\
Department of Mathematics\\
University of Illinois at Urbana Champaign\\
Urbana, IL, USA\\
{\texttt{yifanjing17@gmail.com}}
\and
\large{Masood Masjoody}\\
Department of Mathematics\\
Simon Fraser University\\
Burnaby, BC, Canada\\
{\texttt{mmasjood@sfu.ca}}
}

\date{}

\maketitle

\begin{abstract}
In this research, we determine the structure of (claw, bull)-free graphs. We show that every connected (claw, bull)-free graph is either an expansion of a path, an expansion of a cycle, or the complement of a triangle-free graph; where an expansion of a graph $G$ is obtained by replacing its vertices with disjoint cliques and adding all edges between cliques corresponding to adjacent vertices of $G$. This result also reveals facts about the structure of triangle-free graphs, which might be of independent interest.

{\bf Keywords:} graph classes, (claw, bull)-free, triangle-free 

{\bf MSC number:} 05C75
\end{abstract}

\section{Introduction}

The structure of graphs with some given forbidden subgraphs is well studied, and quickly gained several applications in graph theory and in theoretical computer science. For some of the known results in this field see \cite{brandstadt1999graph}, and \cite{isgci}.

In this paper, we study the structure of (claw, bull)-free graphs. A graph is a \emph{claw} if it is isomorphic to $K_{1,3}$, and a \emph{bull} if it can be obtained from a triangle by adding two pendant edges at two different vertices (Figure \ref{fig}).

\begin{figure}[h]
\centering
\begin{minipage}[t]{0.45\textwidth}
\centering
\includegraphics[width=1.5in]{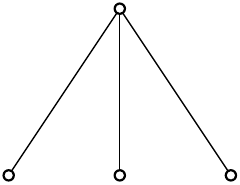}
\caption*{Claw}
\end{minipage}\hfill\begin{minipage}[t]{0.55\textwidth}
\centering
\includegraphics[width=1.5in]{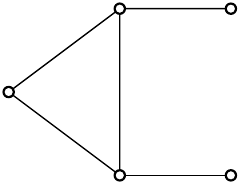}
\caption*{Bull}
\end{minipage}
\caption{Claw and bull}\label{fig}
\end{figure}

\begin{defn}\label{defn: Expansion}
An {\em expansion} of a graph $G$ with vertex set $V(G)=\{v_1,\dots,v_n\}$ is any graph $H$ obtained from $G$ by substituting its vertices with disjoint cliques $K^{[i]}$, $i=1,\dots,n$, (called the {\em bags} of the expansion) and adding the edges of the complete bipartite graphs with the partite sets $V(K^{[i]})$ and $V(K^{[j]})$ for each $v_i v_j\in E(G)$.
\end{defn}
\begin{notation}
Let $X$ and $Y$ be disjoint subsets of the vertex set of a graph $G$. Then we write $X\Leftrightarrow_G Y$ (or simply $X\Leftrightarrow Y$ if the graph $G$ is understood from the context) to mean that every vertex in $X$ is adjacent to every vertex in $Y$. We also denote by $N_G(X)$ (or $N(X)$, if $G$ is understood) the open neighborhood of $X$, defined by $$N_G(X)=\Big(\bigcup_{x\in X} N_G(x)\Big)\setminus X.$$ Furthermore, given a natural number $n\in\mathbb{N}$ we write $[n]:=\{1,2,\dots,n\}$.
\end{notation}

\noindent The main result in this paper is as follows:
\begin{theorem}\label{thm:main}
	A connected graph $G$ is (claw, bull)-free graph if and only if it belongs to one of the following (disjoint) classes of graphs:
	
	\begin{itemize}
		\item the class of graphs which are expansions of paths of length at lesat four,
		\item the class of graphs which are expansions of cycles of length at least six,
		\item the class of connected graphs which are complements of triangle-free graphs.
		
			\end{itemize}
\end{theorem}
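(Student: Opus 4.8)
The plan is to prove both directions of the biconditional, with the forward direction (structure of (claw, bull)-free graphs) being the substantial one. The reverse direction should be routine: I would verify that each of the three listed graph classes is genuinely (claw, bull)-free, and that the three classes are pairwise disjoint. For an expansion of a path or cycle, any induced $K_{1,3}$ would force three pairwise-nonadjacent neighbors of a single vertex, which the path/cycle adjacency structure (each bag sees at most two neighboring bags) forbids; a bull similarly cannot be embedded because its two ``horns'' attach to a triangle in a way incompatible with the linear or cyclic bag ordering. For a complement of a triangle-free graph $\overline{H}$, note that an independent set in $\overline{H}$ corresponds to a clique in $H$, so $\overline{H}$ being claw-free amounts to $H$ having no vertex whose neighborhood contains a triangle-free-complement obstruction; I would translate each forbidden subgraph into its complement (the complement of a claw is $K_1 \cup K_3$, and the complement of a bull is again a bull after checking) and argue directly from triangle-freeness of $H$.

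For the forward direction, the plan is to start from a connected (claw, bull)-free graph $G$ and analyze it through the lens of twin/neighborhood equivalence. First I would define the relation where two vertices are equivalent if they have the same closed neighborhood; since cliques with identical neighborhoods are exactly the bags of an expansion, I would quotient $G$ by this relation to obtain a reduced graph $G'$ with no nontrivial ``true twins,'' so that $G$ is an expansion of $G'$. The key structural claim then becomes: $G'$ is either a path, a cycle, or a graph whose complement is triangle-free. To establish this I would exploit claw-freeness to control neighborhoods—in a claw-free graph, the non-neighbors within any neighborhood form a graph with bounded independence behavior—and use bull-freeness to rule out the branching configurations that would prevent $G'$ from being a path or cycle.

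The heart of the argument, and what I expect to be the main obstacle, is the trichotomy itself: showing that once we are not in the dense ``complement of triangle-free'' regime, the reduced graph is forced to be exactly a path or cycle. The natural approach is a case analysis on the maximum clique size or on the local structure around a vertex of high degree. I would argue that if $G'$ contains a sufficiently rich independent set somewhere, then claw-freeness combined with bull-freeness propagates constraints that ``thin out'' the graph into a linear or cyclic arrangement of bags; conversely, if neighborhoods are dense everywhere, the complement is triangle-free. The delicate part will be handling the boundary between these regimes and the small-length exclusions (paths of length at least four, cycles of length at least six)—I expect that short paths or cycles coincide with members of the complement-of-triangle-free class, which is precisely why those length thresholds appear and why the three classes can be made disjoint.

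The likely proof skeleton is therefore: (i) reduce to a twin-free graph $G'$ via the expansion definition; (ii) dichotomize on whether $G'$ has an induced path or induced independent set of some critical size; (iii) in the sparse case, use repeated applications of claw-freeness and bull-freeness to a maximal induced path, extending it and showing every vertex lies on or attaches cleanly to it, yielding a path or cycle expansion; (iv) in the dense case, show directly that $\overline{G'}$ is triangle-free by assuming a triangle in the complement and extracting a forbidden claw or bull; and (v) verify disjointness and the exact length bounds. The most technical step will be (iii), managing the induction that grows a path while certifying, via the two forbidden subgraphs, that no vertex can branch off the path and that the only way to ``close up'' is into a long cycle.
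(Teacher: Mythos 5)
Your reverse direction and the twin-quotient framing are fine as far as they go (expansions are exactly what one gets by restoring true twins, and adding true twins preserves the property of having a triangle-free complement), but the forward direction has a genuine gap: the dichotomy in steps (ii)--(iv) is never pinned down, and no way of pinning it down makes both halves of your plan correct. If the ``dense case'' means $\alpha(G')\le 2$, then step (iv) is vacuous --- the complement is triangle-free by the very definition of independence number, and there is no claw or bull to extract; if ``dense'' means anything weaker, then step (iv) is false, since expansions of $P_5$ and of $C_6$ are (claw, bull)-free, have independence number exactly $3$, and their complements contain triangles. So the entire content of the theorem sits in your step (iii), and there you only restate the conclusion (``the graph is forced into a linear or cyclic arrangement of bags'') without any mechanism for how claw-freeness and bull-freeness are to be applied to a maximal induced path; in particular you give no argument for why a vertex off the path cannot attach to it in a branching way, which is precisely the hard part.

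The missing organizing idea, which the paper supplies, is to split the analysis by $\ell(G)$, the length of a longest \emph{induced cycle}, rather than by growing a maximal induced path. The paper proves three things: first, if $\ell(G)\ge 6$, then any longest induced cycle dominates $G$, every outside vertex has exactly three consecutive neighbours on it, and vertices sharing two cycle-neighbours are adjacent, forcing a cycle expansion; second --- and this is the step your sparse/dense split has no counterpart for --- if $\ell(G)\in\{4,5\}$ then $\alpha(G)\le 2$, i.e.\ an induced $C_4$ or $C_5$ is outright incompatible with three pairwise nonadjacent vertices in a (claw, bull)-free graph; third, if $\ell(G)\le 3$ and $\alpha(G)\ge 3$, a breadth-first layer analysis from the endpoint of a geodesic realizing the diameter (leaning on a structure lemma imported from the cops-and-robbers literature) shows $G$ is a path expansion. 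The second fact is what makes the case analysis exhaustive and what explains, rather than merely hopes, that the borderline graphs (those containing short induced cycles) fall into the complement-of-triangle-free class. Without it, or some equally concrete replacement, your plan does not close: a graph containing an induced $C_4$ or $C_5$ admits no obvious maximal-induced-path argument, and nothing in your outline rules this situation out or absorbs it into one of the three classes.
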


Since the complement of a bull is still a bull, the complement of triangle free graphs are also (claw, bull)-free. As a corollary of Theorem \ref{thm:main}, this two classes are almost the same.

\begin{corollary}
The class of triangle-free graphs is the union of the class of complete bipartite graphs and the class of complements of all graphs $G$ where $G$ is a connected (claw,bull)-graph which is not an expansion of a path of length at least $4$ or an expansion of a cycle of length at least $6$.
\end{corollary}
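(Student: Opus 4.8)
The plan is to show that the second part of the stated union is nothing but the class of triangle-free graphs whose complement is connected, and that every remaining triangle-free graph is complete bipartite. First I would unwind the quoted family using Theorem~\ref{thm:main}. If $G$ is a connected (claw, bull)-free graph that is neither an expansion of a path of length at least four nor an expansion of a cycle of length at least six, then by the (disjoint) trichotomy of Theorem~\ref{thm:main} the graph $G$ must lie in the third class, i.e.\ $G=\overline{H}$ for some triangle-free $H$, and hence $\overline{G}=H$ is triangle-free. Conversely, if $H$ is any triangle-free graph with $\overline{H}$ connected, then $\overline{H}$ is a connected complement of a triangle-free graph, so $\overline{H}$ is (claw, bull)-free (as noted in the remark preceding the corollary), and by disjointness it cannot be an expansion of a path or of a cycle; thus $\overline{H}$ is one of the admissible graphs $G$ and $H=\overline{G}$. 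Therefore the second part of the union is exactly $\{H : H \text{ triangle-free and } \overline{H} \text{ connected}\}$.

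With this identification, one inclusion is immediate: complete bipartite graphs are bipartite, hence triangle-free, and the complements of the graphs $G$ are triangle-free by the previous paragraph; so the whole union consists of triangle-free graphs. The substance is the reverse inclusion, that every triangle-free graph $H$ falls into at least one of the two parts. Here I would split on the connectivity of $\overline{H}$. If $\overline{H}$ is connected, then $H$ lies in the second part by the identification above and there is nothing more to do.

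The remaining case is when $\overline{H}$ is disconnected, and the plan is to prove the short structural claim that a triangle-free graph with disconnected complement is complete bipartite. If $\overline{H}$ is disconnected, then $V(H)$ partitions into nonempty sets $A$ and $B$ with $A \Leftrightarrow B$ in $H$. Were $H[A]$ to contain an edge $uv$, then for any $w\in B$ the set $\{u,v,w\}$ would induce a triangle in $H$, contradicting triangle-freeness; hence $A$ is independent, and symmetrically so is $B$. Thus $H$ is the complete bipartite graph with parts $A$ and $B$, placing it in the first part of the union. Combining the two cases yields the desired equality.

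The only delicate point, and the step I would be most careful about, is the translation in the first paragraph: one must explicitly invoke the disjointness of the three classes in Theorem~\ref{thm:main} to be sure that a connected complement of a triangle-free graph is never simultaneously an expansion of a path of length at least four or of a cycle of length at least six. Once that bookkeeping is settled, the join/triangle argument in the disconnected case is elementary and the two cases assemble immediately.
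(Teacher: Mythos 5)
Your proof is correct and is essentially the argument the paper intends: the paper states this corollary without proof, presenting it as an immediate consequence of Theorem~\ref{thm:main} together with the remark that complements of triangle-free graphs are (claw, bull)-free. Your write-up supplies exactly the details the paper leaves implicit --- identifying the second class, via the trichotomy and its disjointness, as the triangle-free graphs with connected complement, and observing that a triangle-free graph with disconnected complement must be complete bipartite.
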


In the following three sections we consider three sub-classes of (claw, bull)-free graphs based on the length of a longest cycle. Finally we combining the result of these sections to show Theorem \ref{thm:main}. We will use standard definitions and notation for graphs as given in \cite{bondy2008graph}.

\begin{notation}
Given a graph $G$, we define $\ell(G)$ as the length of a longest induced cycle in $G.$
\end{notation} 

This research was inspired by the following result on the structure of (claw,bull)-free graphs, obtained in a study of the game of cops and robbers\cite{Aigner}:

\begin{lemma}\cite{Masood3}\label{lemma: structure cw-bl-free}
Let $u_0$ and $u_1$ be two adjacent vertices in a (claw,bull)-free graph $G$, and  let $U$ be the set of neighbor of $u_0$ in $G-u_1$. Then, the component $H$  of $u_0$ in $G-U$ is an expansion of a path whose bags; in other words, with $N_0=\{u_0\}$ and $N_i$ being  the $i$th neighborhood of $u_0$ in $H$ for each positive integer $i$, each $N_i$ is a clique and we have $N_i\Leftrightarrow N_{i-1}$ for each $i\ge 1$.
\end{lemma}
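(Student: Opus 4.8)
The plan is to study $H$ via its distance layering from $u_0$: put $N_0=\{u_0\}$ and let $N_i$ be the set of vertices at distance exactly $i$ from $u_0$ in $H$, so that these layers partition $V(H)$ since $H$ is connected. The crucial consequence of deleting $U=N_G(u_0)\setminus\{u_1\}$ is that $u_1$ is the only neighbour of $u_0$ surviving in $H$, whence $N_1=\{u_1\}$; this degenerate first layer is exactly what the removal of $U$ provides, and it will seed the whole recursion. Because $H$ is an induced subgraph of $G$ it is again (claw, bull)-free, and because the $N_i$ are distance layers there are no edges between $N_i$ and $N_j$ when $|i-j|\ge 2$. Granting this, the lemma reduces to proving, for every $i\ge 1$, the clique property $C_i$ that $N_i$ is a clique and the join property $J_i$ that $N_i\Leftrightarrow N_{i-1}$; combined with the absence of long edges, $C_i$ and $J_i$ say precisely that $H$ is an expansion of a path with bags $N_0,N_1,\dots$.

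I would obtain the clique property from claw-freeness, assuming $J_i$ already in hand. If $a,b\in N_i$ were non-adjacent, choose a neighbour $y\in N_{i-1}$ of $a$ and a neighbour $w\in N_{i-2}$ of $y$. Then $J_i$ forces $y\sim b$, while $a\not\sim b$ by assumption and $a,b\not\sim w$ since $N_i$ and $N_{i-2}$ are two layers apart; hence $\{y;a,b,w\}$ is an induced claw, a contradiction. This yields $C_i$ from $J_i$ for all $i\ge 2$, the cases $i=0,1$ being trivial as $N_0$ and $N_1$ are singletons.

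The join property is the step that really uses bull-freeness. Suppose $x\in N_i$ misses some $z\in N_{i-1}$, and let $y\in N_{i-1}$ be a neighbour of $x$ (so $y\neq z$). By $C_{i-1}$ we have $y\sim z$, and by $J_{i-1}$ any $w\in N_{i-2}$ satisfies $w\sim y$ and $w\sim z$, giving a triangle $\{w,y,z\}$. Now $x$ is a horn attached only at $y$ (it misses $z$ by assumption and misses $w$, being two layers above it), and any neighbour $v\in N_{i-3}$ of $w$ is a horn attached only at $w$ (it is two or three layers away from $y,z,x$); thus $\{v,w,z,y,x\}$ induces a bull, a contradiction. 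This delivers $J_i$ for $i\ge 3$. The two remaining base cases are handled by the seed: $J_1$ is the adjacency $u_1\sim u_0$, and $J_2$ holds automatically, since $N_1=\{u_1\}$ forces every vertex of $N_2$ to attach to $u_1$.

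These gadgets assemble into a single induction in which, for increasing $i$, I first prove $J_i$ (from $C_{i-1}$ and $J_{i-1}$, or from the base cases) and then $C_i$ (from $J_i$), so there is no circularity. I expect the main difficulty to be the low-index bookkeeping rather than the gadgets themselves: the bull needs two full layers below its triangle, hence a vertex of $N_{i-3}$, so it only drives $J_i$ for $i\ge 3$, and it is precisely the degeneracy $N_1=\{u_1\}$ forced by deleting $U$ that settles $J_2$ and starts the recursion. Some care is also required to verify each asserted non-adjacency from the layering before invoking claw- or bull-freeness, since one stray edge between non-consecutive layers would destroy the induced configurations on which the whole argument rests.
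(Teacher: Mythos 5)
Your proof is correct, but there is nothing in this paper to compare it against: Lemma~\ref{lemma: structure cw-bl-free} is imported from \cite{Masood3} and stated here without proof, so your argument stands as a self-contained substitute and must be judged on its own merits. It holds up. The observation that deleting $U$ leaves $u_1$ as the sole neighbour of $u_0$ in $H$, so that $N_1=\{u_1\}$ and hence $N_2\Leftrightarrow N_1$ comes for free, is exactly what seeds the induction. The claw $G[\{y,a,b,w\}]$ with centre $y\in N_{i-1}$, non-adjacent leaves $a,b\in N_i$, and leaf $w\in N_{i-2}$ correctly derives the clique property $C_i$ from the join property $J_i$ (valid already at $i=2$, where $y=u_1$ and $w=u_0$). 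The bull on $\{v,w,z,y,x\}$ --- triangle $w,y,z$ supplied by $C_{i-1}$ and $J_{i-1}$, horn $x\in N_i$ attached only at $y$, horn $v\in N_{i-3}$ attached only at $w$ --- correctly derives $J_i$ for $i\ge 3$, and every non-adjacency you invoke is justified, since BFS layers at distance at least two are non-adjacent and $H$, being an induced subgraph of $G$, is itself (claw, bull)-free; the low-index cases also check out ($v=u_0$, $w=u_1$ when $i=3$). The ordering $J_{i-1},C_{i-1}\Rightarrow J_i\Rightarrow C_i$ is free of circularity, and combined with the partition of $V(H)$ into layers and the absence of edges between non-consecutive layers, these properties are precisely the assertion that $H$ is an expansion of a path with bags $N_0,N_1,\dots$, which is the full content of the lemma.
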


Indeed, we shall use Lemma \ref{lemma: structure cw-bl-free} to show that the sub-class of (claw,bull)-free graphs under consideration in Section \ref{section: path-expansions} consists of expansions of paths. 

\section{The case $\ell(G) \geq 6$.}

\begin{lemma}\label{neighboursincycles}
	

Let $G$ be a (claw, bull)-free graph, $C$ an induced cycle of length $k\ge4$ and $x \in N(C)$. Then $N(x)$ contains two consecutive vertices of $C$. Moreover, if $k\ge 5$ then $N(x)$ contains three consecutive vertices of $C$.

\end{lemma}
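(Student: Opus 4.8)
The plan is to derive both statements by exhibiting a forbidden induced subgraph---a claw for the first part and a bull for the second---whenever the desired set of consecutive neighbours fails to appear. Write $C = c_1 c_2 \cdots c_k c_1$ with indices read modulo $k$, and recall that since $C$ is induced, $c_i \sim c_j$ holds if and only if $i$ and $j$ differ by $1$ modulo $k$. Because $x \in N(C)$, after relabelling I may assume $x \sim c_1$.

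For the first statement, suppose toward a contradiction that $N(x)$ contains no two consecutive vertices of $C$; in particular $x \not\sim c_2$ and $x \not\sim c_k$. Then I claim $\{c_1; x, c_2, c_k\}$ induces a claw centred at $c_1$: indeed $c_1$ is adjacent to each of $x$, $c_2$, $c_k$, while $x \not\sim c_2$ and $x \not\sim c_k$ by assumption, and $c_2 \not\sim c_k$ because $C$ is an induced cycle of length $k \ge 4$ (so $c_2$ and $c_k$ lie at distance at least $2$ on $C$). This contradicts claw-freeness, so $x$ is adjacent to $c_2$ or to $c_k$, yielding two consecutive vertices of $C$ in $N(x)$.

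For the second statement, assume $k \ge 5$. By the first part I may relabel so that $x \sim c_1$ and $x \sim c_2$. Suppose toward a contradiction that $x \not\sim c_3$ and $x \not\sim c_k$. I would then check that $\{x, c_1, c_2, c_k, c_3\}$ induces a bull: the triangle is $x c_1 c_2$ (all three edges present), the vertex $c_k$ is a pendant attached at $c_1$, and $c_3$ is a pendant attached at $c_2$. The required non-adjacencies are $c_3 \not\sim c_1$ and $c_2 \not\sim c_k$ (distance at least $2$ on the induced $C$), $x \not\sim c_3$ and $x \not\sim c_k$ (the standing assumption), and crucially $c_3 \not\sim c_k$. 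This last non-edge is exactly where the hypothesis $k \ge 5$ enters: for $k = 4$ one would have $c_3 \sim c_4 = c_k$ and the configuration collapses, whereas for $k \ge 5$ the vertices $c_3$ and $c_k$ are at distance at least $2$ and hence non-adjacent. Bull-freeness is then violated, so $x$ is adjacent to $c_3$ or $c_k$, producing three consecutive vertices of $C$ in $N(x)$.

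The argument presents no real obstacle beyond careful bookkeeping: the only point demanding attention is the verification of the non-adjacencies among the chosen cycle vertices, since these are precisely what forces the two length thresholds ($k \ge 4$ for the claw, $k \ge 5$ for the bull). I would also confirm that the five vertices named in the bull are genuinely distinct---which again uses $k \ge 5$ to separate $c_3$ from $c_k$---and that every ``distance at least $2$'' claim is understood relative to the induced cycle $C$.
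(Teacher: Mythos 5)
Your proof is correct and takes essentially the same route as the paper: the first part via the claw centred at $c_1$ with leaves $x, c_2, c_k$, and the second part via the bull induced on $\{x, c_1, c_2, c_3, c_k\}$ (the paper's $G[\{x,v_1,v_2,v_3,v_k\}]$). Your explicit verification of the non-adjacencies, in particular $c_3 \not\sim c_k$ as the precise point where $k \ge 5$ is needed, simply spells out what the paper leaves implicit.
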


\begin{figure}[h]
\begin{center}
 \includegraphics[scale=1.25]{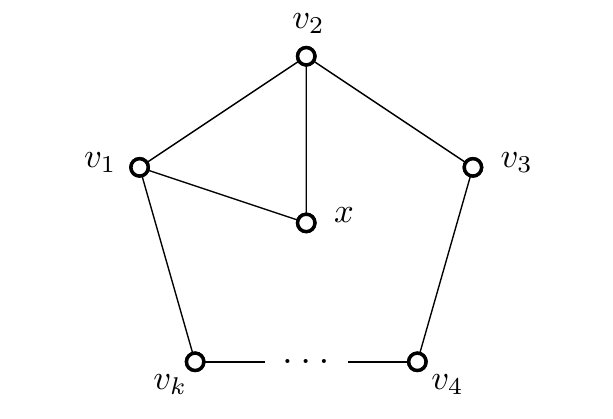}
 \caption{Consecutive neighbors for vertices in $N(C)$ where $C$ has length $\ge 4$}\label{pic: c-b-free-st01}
\end{center}
\end{figure}

\begin{proof}
	

Let $V(C)=\{v_1,\dots,v_k\}$ and suppose $xv_1\in V(G)$. Since $G$ is claw-free, we must have $xv_2\in E(G)$ or $xv_k\in E(G)$, establishing the first claim. Suppose, without loss of generality, that $xv_2\in E(G)$. Then, in case $k\ge 5$ one must have $xv_3\in E(G)$ or $xv_k\in E(G)$, for otherwise $G[\{x,v_1,v_2,v_3,v_k\}]$ would be a bull (See Figure \ref{pic: c-b-free-st01}).
\end{proof}

\begin{lemma}\label{dominatingcycle}
Let $G$ be a connected (claw, bull)-free graph, and $C$ an induced cycle of  length $\ge 4$. Then $N[C]=V(G)$.
\end{lemma}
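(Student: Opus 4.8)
The plan is to show that every vertex of $G$ lies in $N[C]$ by a connectivity argument combined with the local structure forced by Lemma~\ref{neighboursincycles}. Suppose toward a contradiction that $N[C]\neq V(G)$. Since $G$ is connected, there exists a vertex $w\notin N[C]$ that has a neighbor $x\in N(C)$; that is, $w$ is at distance exactly $2$ from $C$, adjacent to some vertex $x$ whose neighborhood meets $V(C)$ but with $w$ itself having no neighbor on $C$. The goal is to derive a forbidden induced claw or bull from the configuration consisting of $w$, $x$, and the consecutive $C$-neighbors of $x$ guaranteed by the previous lemma.

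First I would fix notation: write $V(C)=\{v_1,\dots,v_k\}$ and, using Lemma~\ref{neighboursincycles}, record that $N(x)$ contains two consecutive vertices of $C$, say $v_1,v_2$, and in fact three consecutive vertices when $k\ge 5$. I expect the argument to split according to whether $k=4$ or $k\ge 5$, since the two regimes give different guarantees on how many cycle vertices $x$ sees. The key local picture is the set $\{w,x,v_1,v_2\}$: here $x$ is adjacent to all of $w,v_1,v_2$, while $w$ is adjacent to neither $v_1$ nor $v_2$ (as $w\notin N[C]$). The plan is to examine the adjacencies among $\{v_1,v_2\}$ and any further cycle vertex $x$ sees, and to find a vertex that, together with $x$ as center, completes a claw, or a triangle-plus-two-pendants configuration completing a bull.

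Concretely, for $k\ge 5$ I would take the three consecutive neighbors $v_1,v_2,v_3$ of $x$ on $C$. Since $C$ is induced, $v_1v_3\notin E(G)$, so $v_1,v_3$ are nonadjacent; then $x$ is adjacent to the three pairwise-structured vertices $w,v_1,v_3$, and since $w\not\sim v_1$, $w\not\sim v_3$, and $v_1\not\sim v_3$, the set $\{x,w,v_1,v_3\}$ induces a claw centered at $x$ — the desired contradiction. For $k=4$ I would argue more carefully using the bull: with $v_1,v_2$ the guaranteed consecutive neighbors, consider $\{w,x,v_1,v_2,v_4\}$ or $\{w,x,v_1,v_2,v_3\}$, exploiting that on a $4$-cycle the vertex $x$ must either see an additional, nonconsecutive vertex (forcing a claw as above) or else the edges $v_1v_2$, $xv_1$, $xv_2$ form a triangle with pendants $w$ at $x$ and a suitable cycle vertex, yielding a bull.

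The main obstacle I anticipate is the $k=4$ case, because a $4$-cycle is too short to force three consecutive neighbors, so one cannot immediately extract a claw; instead one must track precisely which subset of $\{v_1,v_2,v_3,v_4\}$ the vertex $x$ is adjacent to and handle each subset, making sure that in every case the pendant vertex $w$ together with $x$ and two cycle vertices produces either a claw (when $x$ has two nonadjacent cycle-neighbors) or a bull (when $x$'s cycle-neighbors are too tightly clustered). Verifying that no subcase escapes both forbidden configurations — in particular ruling out the possibility that $x$ sees all four cycle vertices, which would itself create a claw among nonadjacent cycle vertices — is the delicate bookkeeping step, but once the casework is organized around the pair $(x,w)$ and the induced nonadjacencies on $C$, each branch should close quickly.
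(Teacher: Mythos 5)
Your proposal is correct and follows essentially the same route as the paper's proof: assume a vertex $w$ at distance two from $C$ with neighbor $x\in N(C)$, extract a claw $\{x,w,u,v\}$ whenever $x$ has two nonadjacent neighbors on $C$ (which happens whenever $x$ has three or more cycle-neighbors, in particular when $k\ge 5$), and otherwise conclude $k=4$ with $N(x)\cap V(C)$ two consecutive vertices, yielding a bull from the triangle $x v_1 v_2$ with pendants $w$ and a remaining cycle vertex. The only difference is cosmetic: you split on $k=4$ versus $k\ge 5$ first, while the paper splits on $\lvert N(x)\cap V(C)\rvert$ first; the forbidden-subgraph extractions are identical.
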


\begin{figure}[h]
\centering
\begin{minipage}[t]{0.50\textwidth}
\centering
\includegraphics[width=2.9in]{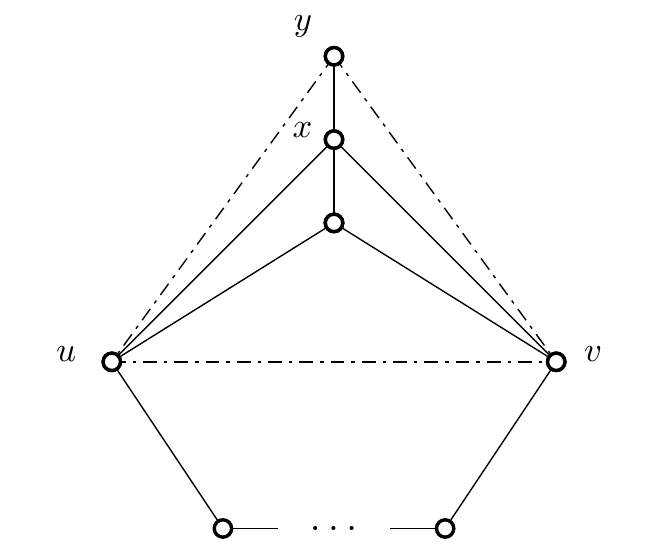}
\caption{Proof of Lemma \ref{dominatingcycle}; the case $\card{N(x)\cap V(C)}\ge 3.$}
\end{minipage}\hfill\begin{minipage}[t]{0.50\textwidth}
\centering
\includegraphics[width=2.9in]{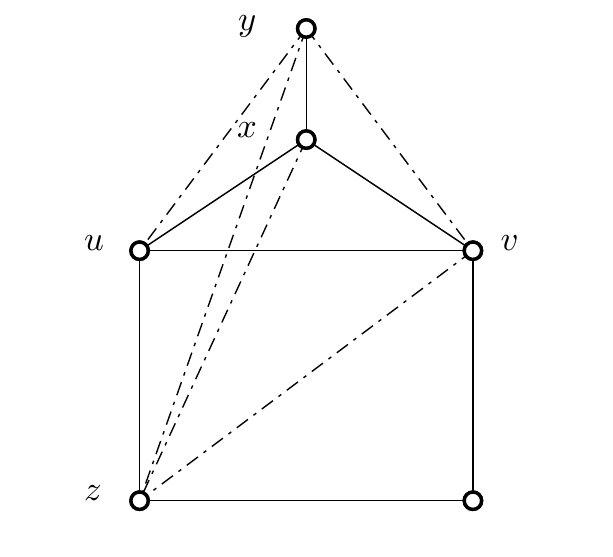}
\caption{Proof of Lemma \ref{dominatingcycle}; the case $\card{N(x)\cap V(C)}= 2.$}
\end{minipage}
\end{figure}

\begin{proof}
	
Suppose $N[C]\not=V(G)$. Choose a vertex $y$ at distance two from $C$ and a vertex $x\in N(C)\cap N(y)$. If $N(x) \cap V(C) \geq 3$, then there exist two vertices $u,v \in N(x) \cap V(C)$ which are not adjacent, in which case $\brac{x,y,u,v}$ induces a claw, a contradiction. Therefore, according to Lemma \ref{neighboursincycles}, $C$ is a cycle of length $4$ such that $N(x)\cap V(C)$ consists of two consecutive vertices of $C$. Then $G[\{N(x)\cap V(C)\}\cup\{x,y,z\}]$ is a bull for $z\in V(C)\setminus N(x)$; a contradiction. 
\end{proof}

\begin{lemma}\label{longcycles}
	
Let $G$ be a connected (claw, bull)-free graph and $C$ an induced cycle of $G$ of length $k$. If $k \geq 6$, then $G$ is an expansion of $C$.
	
\end{lemma}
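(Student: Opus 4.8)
The plan is to build the expansion structure directly on the given cycle $C=v_1v_2\cdots v_k$. By Lemma \ref{dominatingcycle} we have $N[C]=V(G)$, so every vertex of $G$ lies on $C$ or is adjacent to $C$; the whole task therefore reduces to understanding, for each $x\in V(G)\setminus V(C)$, the trace $N(x)\cap V(C)$ and then grouping the vertices into bags accordingly.

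The key structural step is to show that for every $x\notin V(C)$ the set $N(x)\cap V(C)$ consists of exactly three consecutive vertices $v_{i-1},v_i,v_{i+1}$. First, since $G$ is claw-free, $x$ cannot have three pairwise non-adjacent neighbours, so the subgraph $C[N(x)\cap V(C)]$ — a disjoint union of paths (runs) along the cycle — has independence number at most $2$. By Lemma \ref{neighboursincycles} (using $k\ge 5$) this set contains a run of at least three vertices, which already contributes independence number $2$; any vertex in a second run would be non-adjacent to that run and push the independence number to $3$. Hence $N(x)\cap V(C)$ is a single run, and its length is $3$ or $4$ (a run of length $\ge 5$ again has independence number $\ge 3$). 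To eliminate length $4$, suppose $N(x)\cap V(C)=\{v_1,v_2,v_3,v_4\}$; then $\{v_k,v_1,v_2,x,v_4\}$ induces a bull, with triangle $v_1v_2x$, horn $v_k$ at $v_1$ and horn $v_4$ at $x$ (one uses $v_4\not\sim v_1,v_2$ and $v_k\not\sim x,v_2$, which hold because the run is exactly $\{v_1,\dots,v_4\}$ and $k\ge 6$). This contradiction leaves exactly three consecutive vertices, so each $x$ acquires a well-defined middle index $i$.

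With this in hand, set $B_i=\{v_i\}\cup\{x\notin V(C): N(x)\cap V(C)=\{v_{i-1},v_i,v_{i+1}\}\}$; these sets partition $V(G)$, and it remains to verify the three defining properties of an expansion of $C$. (i) Each $B_i$ is a clique: if $x,x'\in B_i$ were non-adjacent, then $x,x',v_{i-2}$ would be three pairwise non-adjacent neighbours of $v_{i-1}$, a claw. (ii) Consecutive bags are complete to one another, $B_i\Leftrightarrow B_{i+1}$: the only nontrivial case is $x\in B_i$, $y\in B_{i+1}$ both off $C$ and non-adjacent, in which case $\{v_{i-2},v_{i-1},v_i,x,y\}$ induces a bull with triangle $xv_{i-1}v_i$ and horns $v_{i-2}$ at $v_{i-1}$ and $y$ at $v_i$. (iii) Non-consecutive bags have no edges: for $x\in B_i$, $y\in B_j$ with $|i-j|\ge 3$ (cyclically), an edge $xy$ makes $\{x;v_{i-1},v_{i+1},y\}$ a claw, while for $|i-j|=2$ (say $y\in B_{i+2}$) it makes $\{v_{i-1},x,y,v_{i+1},v_{i+3}\}$ a bull with triangle $xyv_{i+1}$. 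Together (i)--(iii) exhibit $G$ as the expansion of $C$ with bags $B_1,\dots,B_k$.

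I expect the main obstacle to be the elimination of the four-vertex run in the key step and, relatedly, property (ii): in both situations the naive claw is unavailable (the candidate leaves share a common cycle-neighbour), and one must instead locate a bull by using a second cycle-neighbour of $x$ (or $y$) as one of the horns. Careful handling of the cyclic indexing in the smallest case $k=6$ — where vertices three apart are only at distance $3$ and those four apart are at distance $2$ — is the other point that must be checked explicitly, but all the required non-adjacencies survive precisely because $k\ge 6$.
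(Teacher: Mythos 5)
Your proof is correct and takes essentially the same approach as the paper's: use Lemma \ref{dominatingcycle} to get $N[C]=V(G)$, show every vertex off $C$ sees exactly three consecutive vertices of $C$, and then verify the clique/complete/empty bag structure with claw and bull arguments (your runs-and-independence argument and your bulls are minor variants of the paper's Claim 1 and Claim 2). If anything, your step (iii), ruling out edges between non-consecutive bags, is spelled out more explicitly than in the paper, which leaves that verification implicit.
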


\begin{proof}
\setcounter{claimCount}{0}
	Let $V(C) = \brac{v_1, v_2, \dots, v_k}$. By Lemma \ref{dominatingcycle} we know that $N[C] = V(G)$ and every vertex outside of $C$ has at least three neighbours in $C$. 
\begin{claim}\label{claim: 0} Let $x\in V(G)\setminus V(C)$. Then $\card{N(x)\cap V(C)}=3$.
\end{claim}
\begin{claimproof}
If $\card{N(x)\cap V(C)}=5$, then $N(x)$ would contain an independent set of size three, i.e. $G[N[x]]$ would contain a claw. Hence, proceeding by the way of contradiction and in light of Lemma \ref{neighboursincycles} we may assume $\card{N(x)\cap V(C)}=4$. As such, without loss of generality we may assume $N(x)=\{v_1,v_{a},v_{b},v_{c}\}$ where $1<a<b<c<k$. Note that $1,a,b,c$ cannot be consecutive for otherwise $G[\{x,v_1,v_2,v_4,v_k\}]$ would be bull. Moreover, if $a>2$ (resp. $c>b+1$) then $G[\{x,v_1,v_a,v_c\}]$ (resp. $G[\{x,v_1,v_b,v_c\}]$) would be a claw, a contradiction. Hence, one must have $a=2$, $b>3$ and $c=b+1$. But then $G[\{v_1,v_2,v_b,v_k,x\}]$ would be a bull, a contradiction.
\begin{figure}[h]
\centering
\begin{minipage}[t]{0.50\textwidth}
\centering
\includegraphics[width=2.9in]{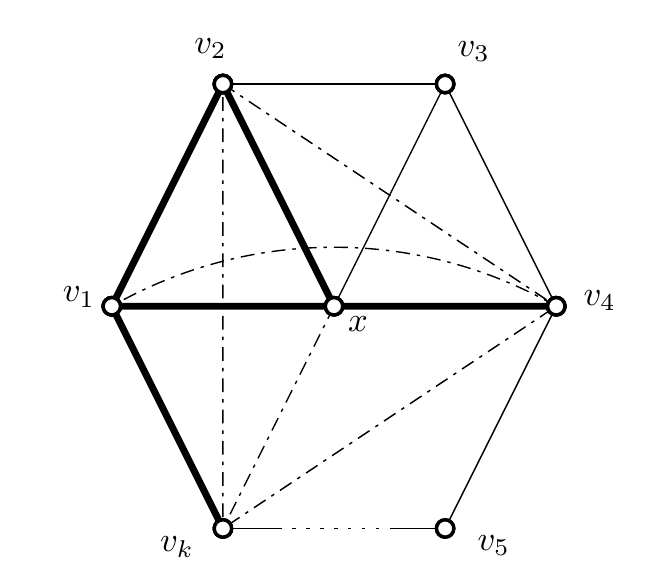}
\caption*{\textbf{(a)} The case $a=2$, $b=3$, $c=4$}
\end{minipage}\hfill\begin{minipage}[t]{0.50\textwidth}
\centering
\includegraphics[width=2.9in]{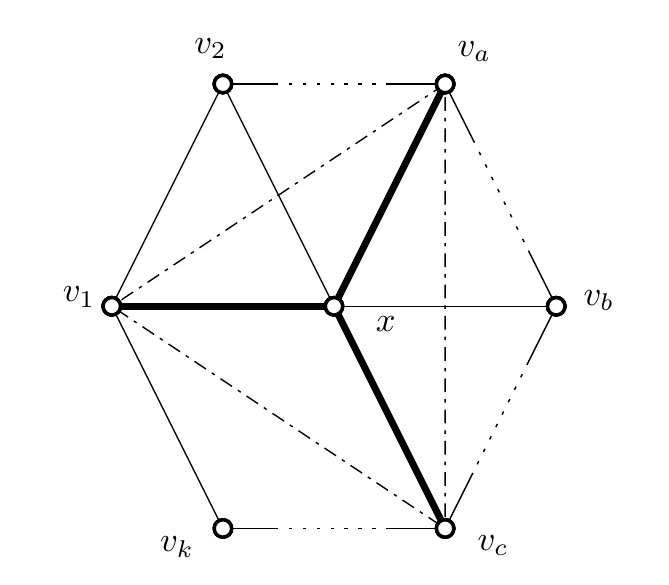}
\caption*{\textbf{(b)} The case $a>2$ (the case $c>b+1$ is similar)}
\end{minipage}
\caption{Ruling out the case $\card{N(x)\cap V(C)}=4$ in Claim \ref{claim: 0}, Lemma \ref{longcycles}, by considering $N(x)=\{v_1,v_{a},v_{b},v_{c}\}$ where $1<a<b<c<k$}\label{pic: Fig-c-b-free-st03-cl1}
\end{figure}
\end{claimproof}
For the rest of the proof, set $N_x:=N(x)\cap V(C)$ for each $x\in V(G)$.

\begin{claim}\label{claim: 1} Let $x,y$ be distinct vertices of $G$ such that $\card{N_x \cap N_y} \geq 2$. Then $xy\in E(G)$.
\end{claim}
\begin{figure}[h]
\centering
\begin{minipage}[t]{0.50\textwidth}
\centering
\includegraphics[width=2.9in]{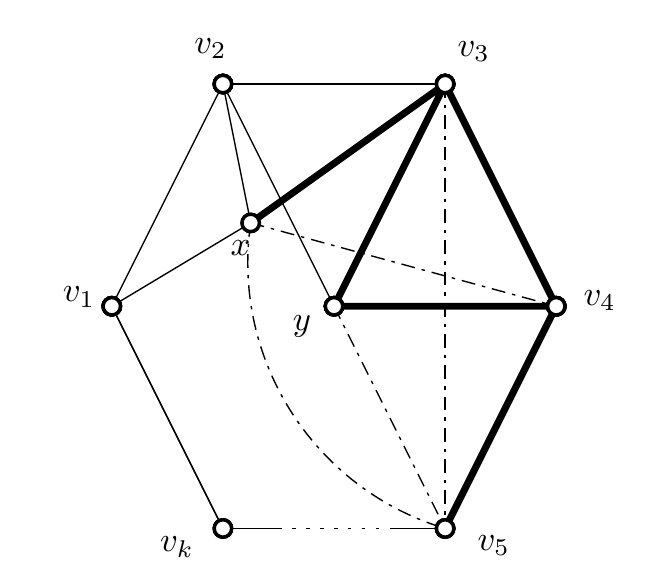}
\caption*{\textbf{(a)} The case $\card{N_x\cap N_y}=2$. With $N_x=\{v_1,v_2,v_3\}$ and $N_y=\{v_2,v_3,v_4\}$, $G[\{x,y,v_3,v_4,v_5\}]$ would be a bull unless $xy\in E(G)$.}
\end{minipage}\hfill\begin{minipage}[t]{0.50\textwidth}
\centering
\includegraphics[width=2.9in]{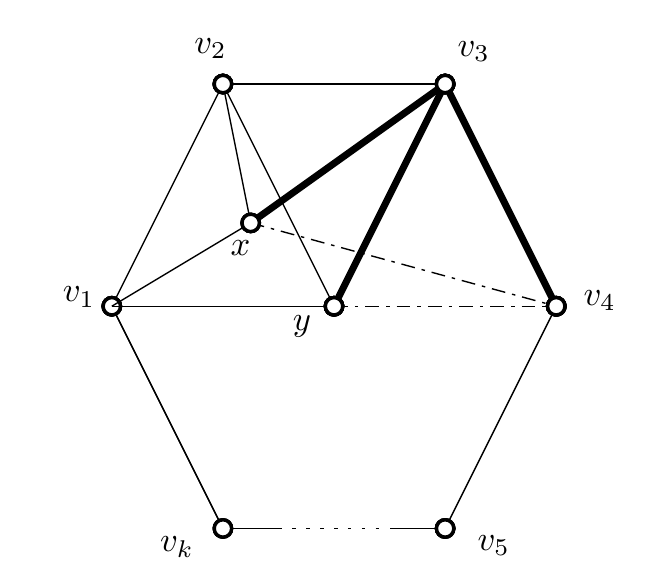}
\caption*{\textbf{(b)}  The case $\card{N_x\cap N_y}=2$. With $N_x=N_y=\{v_1,v_2,v_3\}$, $G[\{x,y,v_3,v_4\}]$ would be a claw unless $xy\in E(G)$.}
\end{minipage}
\caption{Cases I and II in Claim \ref{claim: 1}, Lemma \ref{longcycles}}\label{pic: Fig-c-b-free-st04-cl2}
\end{figure}
\begin{claimproof}
If $x\in V(C)$ then $y\in V(G)\setminus V(C)$; thereby, according to Lemma \ref{neighboursincycles}, $N_y$ consists of three consecutive vertices of $C$. Hence by Claim \ref{claim: 0} we have $N_y=N_x\cup \{x\}$ and; in particular, $xy\in E(G)$. Hence, we may assume $x,y\in V(G)\setminus V(C)$. Suppose, contrary to the claim, that $xy\notin E(G)$.
\medskip

\Case{I}{$\card{N_x\cap N_y}=2$.}

Let $N_x=\{v_1,v_2,v_3\}$ and $N_y=\{v_2,v_3,v_4\}$. As such, $G[\{x,y,v_3,v_4,v_5\}]$ would be a bull unless $xy\in E(G)$.
\medskip

\Case{II}{$\card{N_x\cap N_y}=3$.}

In this case, $N_x$ and $N_y$ are the same set, say, $\brac{v_1,v_2,v_3}$. As such, $G[\{x,y,v_3,v_4\}]$ would be a claw unless $xy\in E(G)$.
\end{claimproof}
For each $i\in[1\dcdot k]$ set $C_i=\{x\in V(G): N_x\supseteq N_{v_i}\}$. According to Lemma \ref{neighboursincycles} $C_i$s partition $V(G)$. Furthermore, in light of Claim \ref{claim: 1} it follows that:
\begin{itemize}
\item each $C_i$ is a clique,
\item $E[C_i,C_j]$ is a complete bipartite graph if $v_i$ and $v_j$ are consecutive vertices of $C$, and
\item  $E[C_i,C_j]$ has no edge if $v_i$ and $v_j$ are distinct nonconsecutive vertices of $C$;
\end{itemize}
from which it follows that $G$ is an expansion of $C$, as desired.	
\end{proof}

\section{The case $\ell(G) \in\{4,5\}$.}

\begin{lemma}\label{lemma: comp. tri.free}
	
	Let $G$ be a (claw, bull)-free graph. If $\ell(G)=4$ or $5$, then the maximum size of an independent set of vertices in $G$ is at most $2$. 
	
\end{lemma}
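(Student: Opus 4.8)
The plan is to argue by contradiction: suppose $G$ contains an independent set $\brac{a,b,c}$ of three pairwise non-adjacent vertices, and let $C=v_1\cdots v_k$ be a longest induced cycle of $G$, so that $k\in\brac{4,5}$. Since $G$ is connected, Lemma~\ref{dominatingcycle} gives $N[C]=V(G)$, so each of $a,b,c$ lies on $C$ or in $N(C)$; and by Lemma~\ref{neighboursincycles} the trace $N(x)\cap V(C)$ of any $x\in N(C)$ is a set of at least $k-2$ consecutive vertices of $C$ (two consecutive when $k=4$, three when $k=5$). The whole argument will play the extremality of $C$ (no induced cycle of length $>k$) against (claw, bull)-freeness.

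The first step is to show that at most one of $a,b,c$ can lie on $C$. Three of them cannot, since $\alpha(C_k)=2$ for $k\in\brac{4,5}$. To exclude exactly two, I would use the elementary fact that for $k\in\brac{4,5}$ every block of $k-2$ consecutive vertices of $C_k$ meets every independent pair of $C_k$: in $C_4$ each edge contains one vertex of any antipodal pair, and in $C_5$ each triple of consecutive vertices contains one vertex of any distance-two pair. Hence if two members of the triple, say $a,b$, lay on $C$ (they would necessarily form an independent pair of $C$), then the third member $c$, being in $N(C)$, would have its consecutive trace meet $\brac{a,b}$ and thus be adjacent to $a$ or to $b$, contradicting independence. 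So at least two of $a,b,c$, say $b$ and $c$, lie in $N(C)$.

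It then remains to analyse the non-adjacent off-cycle vertices $b,c$, whose traces $I_b=N(b)\cap V(C)$ and $I_c=N(c)\cap V(C)$ are intervals of length at least $k-2$. The mechanism is that whenever $b$ and $c$ share two consecutive cycle-neighbours, or whenever a cycle vertex would have $b,c$ together with a further mutual non-neighbour among its neighbours, claw- or bull-freeness is violated; and otherwise the two long intervals $I_b,I_c$ reach toward opposite sides of $C$, so one can route an induced cycle through $b$, $c$ and the vertices of $C$ that both avoid, producing an induced $6$-cycle and contradicting $\ell(G)=k\le 5$. Concretely I would pin down the possible positions of $I_b$ and $I_c$ (a short finite list, after using claw-freeness at the cycle vertices adjacent to both $b$ and $c$ to force each interval to extend on a prescribed side) and in each configuration exhibit a claw, a bull, a $6$-cycle, or show that no vertex of $G$ can serve as the remaining independent vertex — as happens, for instance, when $I_b$ and $I_c$ are two opposite edges of a $C_4$, where every vertex turns out to be adjacent to $b$ or to $c$, so that $a$ has nowhere to sit.

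The main obstacle is precisely this last case analysis. The delicate points are that an off-cycle vertex may be adjacent to almost all (even all) of $C$, so the intervals $I_b,I_c$ need not be small and several overlap patterns must be considered; that $k=4$ and $k=5$ behave differently because the guaranteed interval length differs; and that, when building a $6$-cycle, one must verify it is \emph{induced}, i.e.\ that the potential chords coming from $C$ and from the large neighbourhoods of $b$ and $c$ are genuinely absent. Organising these finitely many configurations so that each collapses to a forbidden claw, a forbidden bull, a violation of the maximality of $C$, or an impossibility for the third independent vertex is where the real work lies.
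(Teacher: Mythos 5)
Your opening step is correct and coincides with the paper's treatment of the easy subcases: using Lemmas \ref{dominatingcycle} and \ref{neighboursincycles}, the trace of any off-cycle vertex contains a block of $k-2$ consecutive vertices of $C$, and since such a block meets every independent pair of $C_k$ for $k\in\{4,5\}$, at most one vertex of the independent set can lie on $C$. This is exactly Subcases 1.1 and 2.1 of the paper's proof.

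Everything after that, however, is a declaration of intent rather than a proof, and it is precisely where the entire content of the lemma sits. The configurations you defer --- one independent vertex on $C$ and two off it, or all three off it --- are the paper's Subcases 1.2, 1.3, 2.2 and 2.3, each of which requires a genuine multi-step forcing argument: claw-freeness at a common cycle neighbour first separates the traces of two off-cycle independent vertices, bull-freeness then pins each successive trace into an essentially unique position, and only after several such steps does the final forbidden bull appear. You assert that ``a short finite list'' of trace positions can each be collapsed to a claw, a bull, an induced $6$-cycle, or an impossibility for the third vertex, but you do not work through a single configuration, and you yourself identify this case analysis as ``where the real work lies.'' Two subsidiary points: your alternative contradiction route via an induced $6$-cycle is admissible in principle (it would contradict the hypothesis $\ell(G)\in\{4,5\}$, a route the paper never needs), but verifying inducedness is delicate because off-cycle vertices can be adjacent to nearly all of $C$, and you never exhibit such a cycle; and your sample claim for opposite edges of a $C_4$ --- that every vertex of $G$ must be adjacent to $b$ or to $c$ --- is not an observation but itself a consequence of claw/bull-freeness that needs proof. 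As it stands, the proposal establishes only the easy third of the lemma.
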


\begin{proof}
Let $I$ be a largest independent set in $G$ with $\card{I}\ge 3$.
\medskip

\Case{1}{$\ell(G)=4$.}

Let $C=v_1,v_2,v_3,v_4,v_1$ be an induced cycle in $G$. Since $\alpha(C)=2$, $\card{I\cap V(C)}\in\{0,1,2\}$.
\smallskip

\Subcase{1.1}{$\card{I\cap V(C)}=2$.}

According to Lemmas \ref{neighboursincycles} and \ref{dominatingcycle} every vertex $x\in I\setminus V(C)$ is adjacent to two consecutive vertices of $C$. Hence, $I\cap V(C)$ has to consist of two consecutive vertices of $C$, a contradiction.
\smallskip

\Subcase{1.2}{$\card{I\cap V(C)}=1$.}

Let $I\cap V(C)=v_1$ and $x,y$ be distinct vertices in $I\setminus V(C)$. Without loss of generality, suppose $v_2,v_3\in N(x)$. Note that if $v_2y\in E(G)$, then $G[\{v_1,v_2,x,y\}]$ would be a claw. Hence, we must have $v_3,v_4\in N(y)$. But then $G[\{v_1,v_2,v_3,x,y\}]$ would be a bull, a contradiction.
\smallskip

\begin{figure}[h]
\begin{center}
 \includegraphics[scale=1]{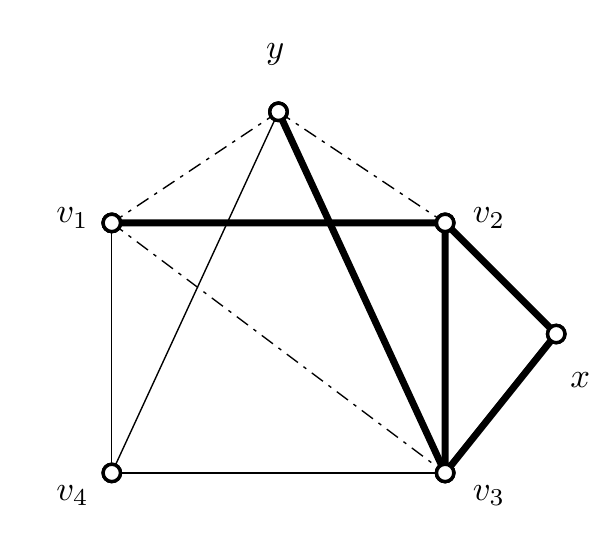}
 \caption{Subcase 1.2, Lemma \ref{lemma: comp. tri.free}. With $\{x,y,v_1\}\Seq I$ one must have $v_2y\not\in E(G)$; thereby, $\{v_3,v_4\}\Seq N(y)$. Then $G[\{v_1,v_2,v_3,x,y\}]$ will be a bull.}\label{pic: c-b-free-st01-1.2}
\end{center}
\end{figure}

\Subcase{1.3}{$\card{I\cap V(C)}=0$.}

Let $x,y,z$ be distinct vertices in $I$. Since $G$ is claw-free, no vertex of $C$ is adjacent to all three of $x,y,z$. Hence, by the pigeonhole principle and Lemmas \ref{neighboursincycles} and \ref{dominatingcycle}, we may assume $v_3\notin N(x)$ and $v_4\notin N(x)$, which imply $xv_1,xv_2\in E(G)$. Furthermore, we may assume $v_1\notin N(y)$ (See Figure \ref{pic: c-b-free-st01-1.3}). If in addition $v_4\notin N(y)$, we would have $v_2y,v_3y\in E(G)$ in which case $G[\{v_2,v_3,v_4,x,y\}]$ would be a bull. Hence, $v_4y\in E(G)$, which in turns implies $v_3y\in E(G)$ (according to Lemma \ref{neighboursincycles}). Now observe that if $v_1,v_4\in N(z)$ then $G[\{v_1,v_4,x,y,z\}]$ would be a bull, and if only one of $v_1,v_4$ is in $N(z)$ then $G[\{v_1,v_4,x,z\}]$ or $G[\{v_1,v_4,y,z\}]$ would be a claw. Hence, $v_1\notin N(z)$ and $v_4\notin N(z)$; thereby $v_2,v_3\in N(z)$. But then $G[\{v_2,v_3,v_4,x,z\}]$ would be a bull, a contradiction. 
\medskip

\begin{figure}[h]
\begin{center}
 \includegraphics[scale=1]{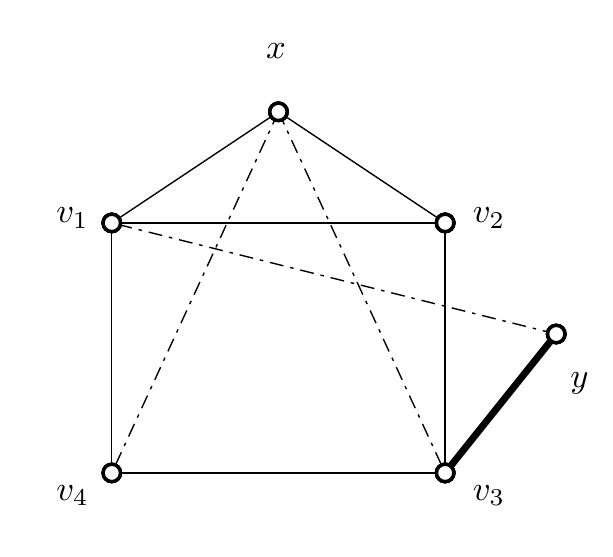}
 \caption{General situation in Subcase 1.3, Lemma \ref{lemma: comp. tri.free}. With $\{x,y,z\}\Seq I$, one may assume $xv_3\not\in E(G)$, $xv_4\not\in E(G)$, implying $xv_1,xv_2\in E(G)$. One may further assume $yv_1\not\in E(G)$. As such, Lemmas \ref{neighboursincycles} and \ref{dominatingcycle} imply $yv_3\in E(G)$. }\label{pic: c-b-free-st01-1.3}
\end{center}
\end{figure}

\begin{figure}[hp]
\centering
\begin{minipage}[t]{0.50\textwidth}
\centering
\includegraphics[width=2.6in]{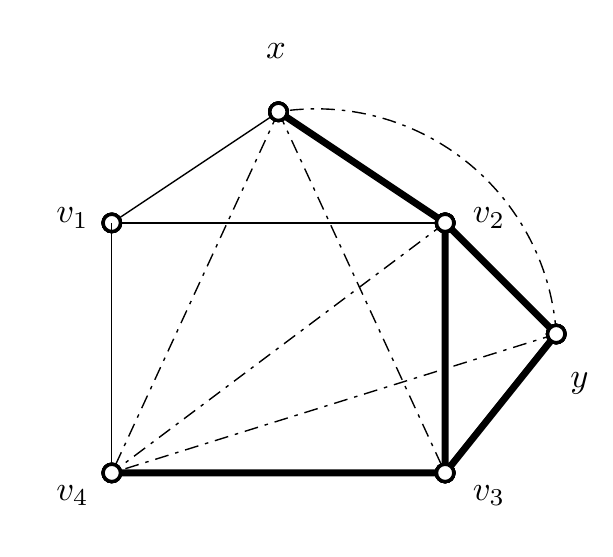}
\caption*{\textbf{(a)} If $v_4y\not\in E(G)$, then $yv_2\in E(G)$; thereby, $G[\{v_2,v_3,v_4,x,y\}]$ would be a bull.}
\end{minipage}\hfill\begin{minipage}[t]{0.50\textwidth}
\centering
\includegraphics[width=2.6in]{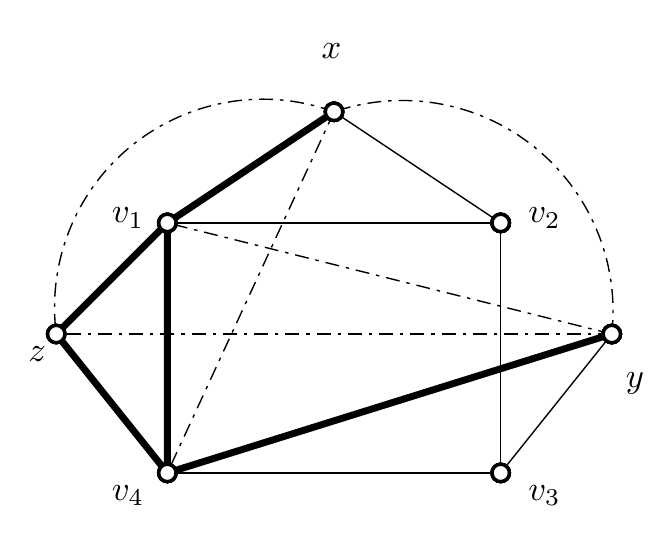}
\caption*{\textbf{(b)} If $v_1z,v_4z\in E(G)$, $G[\{v_1,v_4,x,y,z\}]$ would be a bull.}
\end{minipage}\hfill\begin{minipage}[t]{0.50\textwidth}
\centering
\includegraphics[width=2.6in]{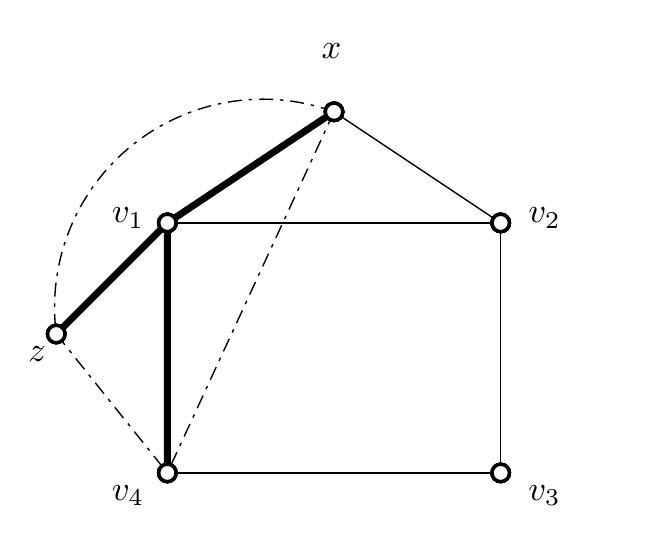}
\caption*{\textbf{(c)} If $v_1z\in E(G)$ and $v_4z\not\in E(G)$ then $G[\{v_1,v_4,x,z\}]$ would be a claw.}
\end{minipage}\hfill\begin{minipage}[t]{0.50\textwidth}
\centering
\includegraphics[width=2.6in]{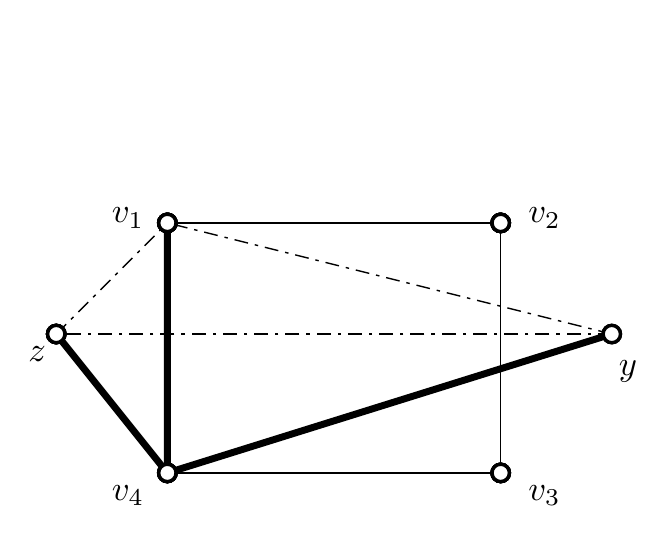}
\caption*{\textbf{(d)} If $v_4z\in E(G)$ and $v_1z\not\in E(G)$ then $G[\{v_1,v_4,y,z\}]$ would be a claw.}
\end{minipage}\hfill\begin{minipage}[t]{0.50\textwidth}
\centering
\includegraphics[width=2.6in]{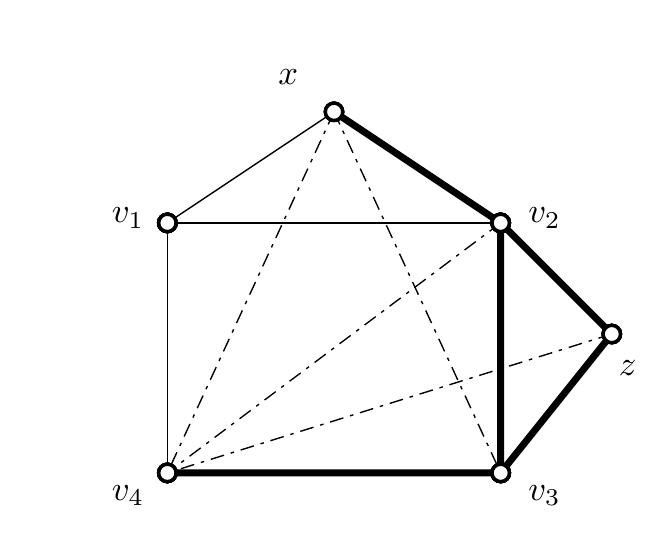}
\caption*{\textbf{(e)} Since $v_2z,v_3z\in E(G)$ and $v_4z\not\in E(G)$, $G[\{v_2,v_3,v_4,x,z\}]$ is a bull, a contradiction.}
\end{minipage}
\caption{Lemma \ref{lemma: comp. tri.free} with $\ell(G)=4$. From the general situation described in Figure \ref{pic: c-b-free-st01-1.3} one gets $\{v_3,v_4\}\Seq N(y)$ and $N(z)\cap V(C)=\{v_2,v_3\}$, leading to the bull in \textbf{(e)}}\label{pic: Fig-c-b-free-st01-l4}
\end{figure}

	
	
	
	
	
	

\Case{2}{$\ell(G)=5$}

Let $C=v_1,v_2,v_3,v_4,v_5,v_1$ be an induced cycle in $G$.
\smallskip

\Subcase{2.1}{$\card{I\cap V(C)}=2$.}

Every vertex $x\in I\setminus V(C)$ is adjacent to three consecutive vertices of $C$. Hence, likewise Case 1.1, $I$ has to contain two consecutive vertices of $C$, a contradiction.
\smallskip

\Subcase{2.2}{$\card{I\cap V(C)}=1$.}

Let $I\cap V(C)=\{v_1\} $ and $x,y$ be distinct vertices in $I\setminus V(C)$. Without loss of generality, suppose $v_2,v_3,v_4\in N(x)$. If $v_2y\in E(G)$, then $G[\{v_1,v_2,x,y\}]$ would be a claw. Hence, $N(y)\cap V(C)=\{v_3,v_4,v_5\}$. But then $G[\{v_1,v_2,v_3,x,y\}]$ would be a bull, a contradiction.
\smallskip

\begin{figure}[H]
\begin{center}
 \includegraphics[scale=1.25]{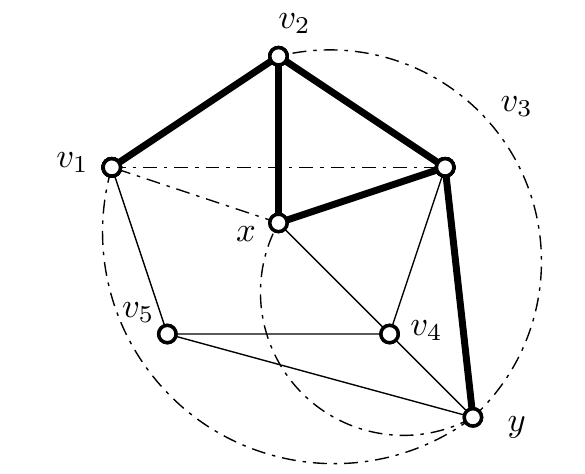}
 \caption{Subcase 2.2, Lemma \ref{lemma: comp. tri.free}. With $\{x,y,v_1\}\Seq I$ and $\{v_2,v_3,v_4\}\Seq N(x)$, one gets $v_2y\not\in E(G)$, since $G$ is claw-free. But then $G[\{v_1,v_2,v_3,x,y\}]$ will be a bull, a contradiction}\label{pic: c-b-free-case2-2}
\end{center}
\end{figure}
\begin{figure}[H]
\begin{center}
 \includegraphics[scale=1.25]{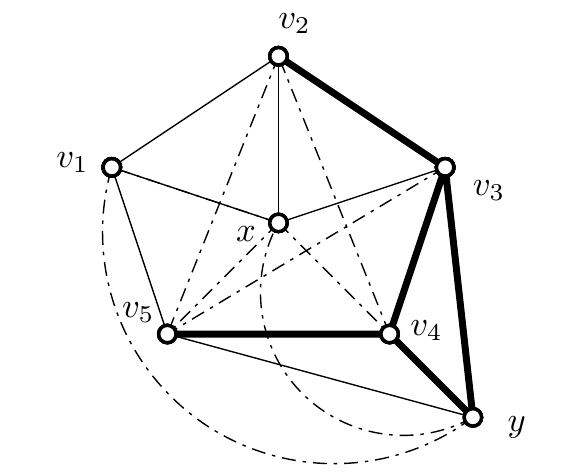}
 \caption{General situation in Subcase 2.3, Lemma \ref{lemma: comp. tri.free}. With $\{x,y,z\}\Seq I$, one may assume $xv_4\not\in E(G)$, $xv_5\not\in E(G)$, implying $xv_1,xv_2,xv_3\in E(G)$. One may further assume $yv_1\not\in E(G)$. As such, $yv_5\in E(G)$, for otherwise $G[\{v_3,v_4,v_5,x,y\}]$ would be a bull.}\label{pic: c-b-free-case2-3-general}
\end{center}
\end{figure}

\Subcase{2.3}{$\card{I\cap V(C)}=0$.}

Let $x,y,z$ be distinct vertices in $I$. Since $G$ is claw-free, no vertex of $C$ is adjacent to all three of $x,y,z$. Hence, by the pigeonhole principle and Lemmas \ref{neighboursincycles} and \ref{dominatingcycle}, we may assume $v_4\notin N(x)$ and $v_5\notin N(x)$, which imply $xv_1,xv_2,xv_3\in E(G)$. Furthermore, we may assume $v_1\notin N(y)$; thereby, $yv_3, yv_4\in E(G)$. But then we must have $yv_5\in E(G)$ for otherwise $G[\{v_3,v_4,v_5,x,y\}]$ would be a bull. If $v_1,v_5\in N(z)$ then $G[\{v_1,v_5,x,y,z\}]$ would be a bull, and if only one of $v_1,v_5$ is in $N(z)$ then $G[\{v_1,v_5,x,z\}]$ or $G[\{v_1,v_5,y,z\}]$ would be a claw. Hence, $v_1\notin N(z)$ and $v_5\notin N(z)$; thereby $v_2,v_3,v_4\in N(z)$. But then $G[\{v_3,v_4,v_5,x,z\}]$ would be a bull, a contradiction.
\begin{figure}[H]
\centering
\begin{minipage}[t]{0.50\textwidth}
\centering
\includegraphics[width=2.8in]{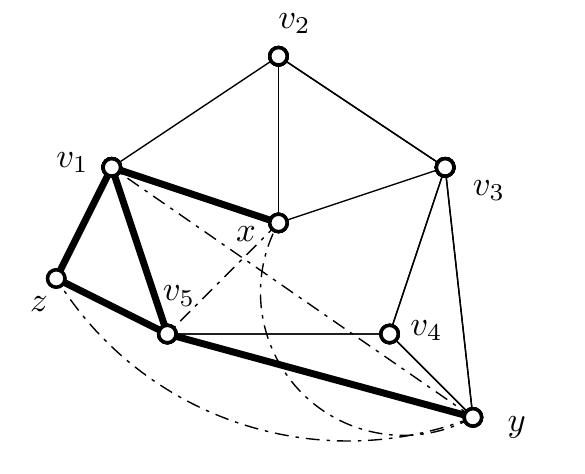}
\caption*{\textbf{(a)} If $v_1z,v_5z\in E(G)$, $G[\{v_1,v_5,x,y,z\}]$ would be a bull.}
\end{minipage}\hfill\begin{minipage}[t]{0.50\textwidth}
\centering
\includegraphics[width=2.8in]{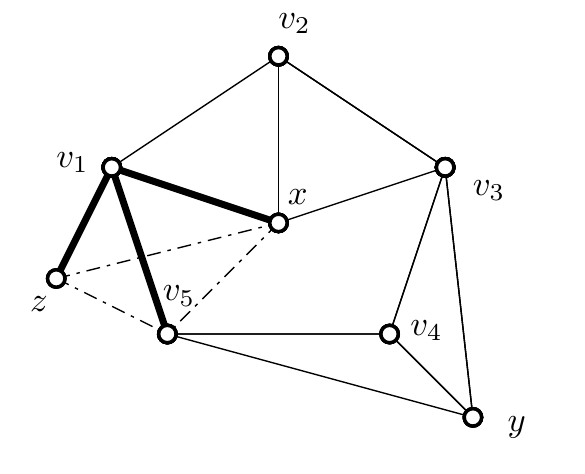}
\caption*{\textbf{(b)} If $v_1z\in E(G)$ and $v_5z\not\in E(G)$ then $G[\{v_1,v_5,x,z\}]$ would be a claw.}
\end{minipage}\hfill\begin{minipage}[t]{0.50\textwidth}
\centering
\includegraphics[width=2.8in]{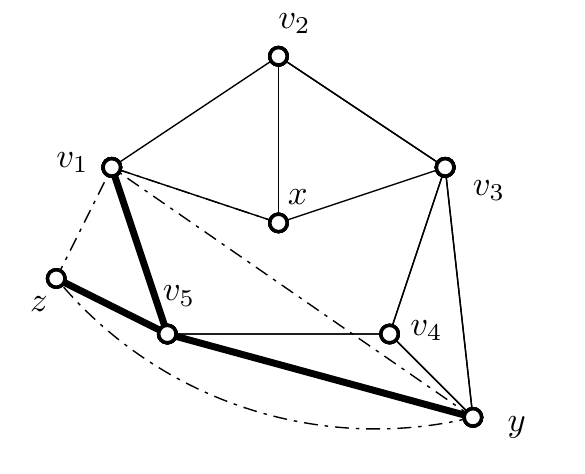}
\caption*{\textbf{(c)} If $v_5z\in E(G)$ and $v_1z\not\in E(G)$ then $G[\{v_1,v_5,y,z\}]$ would be a claw.}
\end{minipage}\hfill\begin{minipage}[t]{0.50\textwidth}
\centering
\includegraphics[width=2.8in]{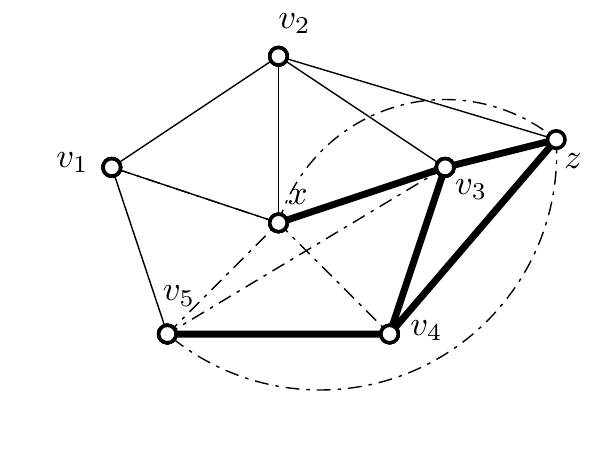}
\caption*{\textbf{(d)} Since $v_3z,v_4z\in E(G)$ and $v_5z\not\in E(G)$, $G[\{v_3,v_4,v_5,x,z\}]$ is a bull, a contradiction.}
\end{minipage}
\caption{Lemma \ref{lemma: comp. tri.free} with $\ell(G)=5$. From the general situation described in Figure \ref{pic: c-b-free-case2-3-general} one gets $\{v_3,v_4\}\Seq N(y)$ and $N(z)\cap V(C)=\{v_1,v_2,v_3\}$, leading to the bull in \textbf{(d)}.}\label{pic: Fig-c-b-free-case2-3}
\end{figure}
	\end{proof}
\section{The case $\ell(G)\le 3$ with $\alpha(G)\ge 3$.}\label{section: path-expansions}
\begin{lemma}\label{lemma: diam-chordal}
Let $G$ be a (claw, bull)-free graph with $\alpha(G)\ge 3$ and $diam(G)=2$. Then $\ell(G)\ge 6$.
\end{lemma}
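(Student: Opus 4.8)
The plan is to produce an induced $6$-cycle directly from the hypotheses, which immediately gives $\ell(G)\ge 6$. Since $\alpha(G)\ge 3$, I would fix three pairwise non-adjacent vertices $a,b,c$. Because $\mathrm{diam}(G)=2$ and these vertices are pairwise non-adjacent, each of the three pairs is at distance exactly $2$ and hence has a common neighbour; choose $p\in N(a)\cap N(b)$, $q\in N(a)\cap N(c)$, and $r\in N(b)\cap N(c)$.

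The first step is to record the non-adjacencies forced by claw-freeness. If $p$ were adjacent to $c$, then $G[\{p,a,b,c\}]$ would be a claw; hence $p\not\sim c$, and symmetrically $q\not\sim b$ and $r\not\sim a$. These three non-adjacencies, together with the independence of $\{a,b,c\}$, also guarantee that $p,q,r$ are pairwise distinct and distinct from $a,b,c$ (for instance $p\sim b$ while $q\not\sim b$, so $p\ne q$), so that $a,p,b,r,c,q$ are six distinct vertices.

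The key step is to exclude every edge among $p,q,r$ using bull-freeness, exploiting that each pair of these common neighbours shares exactly one vertex of $\{a,b,c\}$: $p,q$ share $a$, while $p,r$ share $b$ and $q,r$ share $c$. Suppose, for contradiction, that $p\sim q$. Then $\{a,p,q\}$ is a triangle, $b$ is attached to it only at $p$ (since $b\not\sim a$ and $b\not\sim q$), and $c$ is attached only at $q$ (since $c\not\sim a$ and $c\not\sim p$), while $b\not\sim c$; hence $G[\{a,b,c,p,q\}]$ is a bull, a contradiction. Running the identical argument on the pairs sharing $b$ and $c$ yields $p\not\sim r$ and $q\not\sim r$. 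With all three ``short'' chords excluded, I would then verify that $a,p,b,r,c,q$ induces a cycle: the six consecutive pairs $ap,pb,br,rc,cq,qa$ are edges by construction, and each of the nine non-consecutive pairs is a non-edge — $ab,bc,ca$ by independence, $pc,qb,ra$ by claw-freeness, and $pq,pr,qr$ by the previous paragraph. Thus $G$ contains an induced $C_6$ and $\ell(G)\ge 6$.

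The main obstacle is conceptual rather than computational: one must select the right six vertices and notice that any potential edge among the common neighbours $p,q,r$ must create a bull, precisely because any two of them have a common neighbour inside the independent triple. Once that observation is in hand the verification is routine adjacency-checking. I note that an alternative, slightly weaker route would be to produce merely \emph{some} induced cycle of length $\ge 4$ and then invoke Lemma \ref{lemma: comp. tri.free} with $\alpha(G)\ge 3$ to rule out lengths $4$ and $5$ and thereby upgrade to length $\ge 6$; but the direct construction of $C_6$ above makes this appeal unnecessary.
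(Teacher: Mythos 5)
Your proof is correct and is essentially identical to the paper's: both take an independent triple, pick a common neighbour for each pair (forced to avoid the third vertex by claw-freeness), and then use bull-freeness to exclude chords among the three common neighbours, yielding the same induced $6$-cycle. Your write-up is slightly more explicit about vertex distinctness and the full list of non-adjacent pairs, but the argument is the same.
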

\begin{proof}
Let $\{\alpha_1,\alpha_2,\alpha_3\}$ be an independent set of vertices in $G$. Since $diam(G)= 2$, for each $i\in[3]$ there is a common neighbor $w_i\in V(G)$ of the $\alpha_j$s for $j\in[3]\setminus\{i\}$. Moreover, for each $i\in[3]$ we have $w_i\alpha_i\not\in E(G)$, for otherwise $G[\{\alpha_1,\alpha_2,\alpha_3\}\cup\{w_i\}]$ would be a claw. We shall show that the 6-cycle $C:\;\alpha_1 w_3 \alpha_2 w_1 \alpha_3 w_2 \alpha_1$ is induced; thereby $\ell(G)\ge 6$. To this end, suppose on the contrary, that $C$ has a chord. As such, without loss of generality we may assume $w_2 w_3\in E(G)$. But then $G[\{\alpha_1,\alpha_2,\alpha_3,w_2,w_3\}]$ will be a bull, a contradiction. Hence, $C$ is an induced cycle, as desired.
\end{proof}
\begin{figure}[H]
\begin{center}
 \includegraphics[scale=1.1]{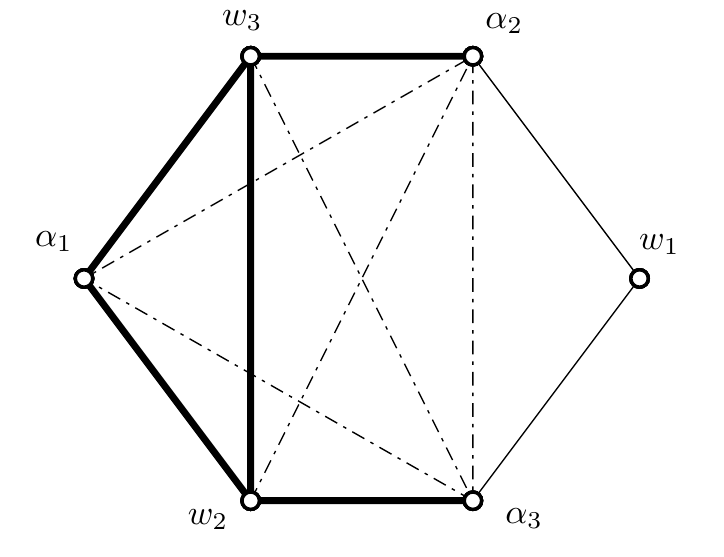}
 \caption{Cycle $C$ introduced in the proof of Lemma \ref{lemma: diam-chordal} is induced: If $w_2w_3\in E(G)$ then $G[\{\alpha_1,\alpha_2,\alpha_3,w_2,w_3\}]$ would be a bull, a contradiction.}\label{pic: c-b-free-diam-2-l-ge-6}
\end{center}
\end{figure}

The following proposition will be used in multiple occasions in the proof of Lemma \ref{shortcycles}.
\begin{prop}\label{cycle-chordal}
Let $u,v_0,v_1,\dots,v_k,u$ be a cycle in a graph $G$ with $\ell(G)\le 3$, such that $v_0,v_1,\dots,v_k$ is an induced path in $G$. Then $uv_i\in E(G)$ for each $i\in[k]$.
\end{prop}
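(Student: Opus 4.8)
The plan is to induct on $k$, the number of internal steps of the induced path (equivalently, on the length $k+2$ of the cycle). The base case is $k=1$, where the cycle is simply the triangle $u,v_0,v_1,u$; here the single required adjacency $uv_1\in E(G)$ is an edge of the cycle itself, so nothing is to prove.

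For the inductive step I would assume $k\ge 2$ and that the statement holds for every shorter cycle of the prescribed form. First I would record that all of $u,v_0,\dots,v_k$ are distinct: the $v_i$ are pairwise distinct because $v_0,\dots,v_k$ is an induced path, and $u$ is a separate vertex of the cycle. Hence the cycle $u,v_0,\dots,v_k,u$ has length $k+2\ge 4$. Since $\ell(G)\le 3$ means $G$ has no induced cycle of length $\ge 4$, this cycle cannot be induced and so must carry a chord. Because $v_0,\dots,v_k$ is induced, no chord can join two path vertices; therefore every chord is incident with $u$, which yields an index $i$ with $1\le i\le k-1$ and $uv_i\in E(G)$ (the values $0$ and $k$ are excluded because $uv_0$ and $uv_k$ are already cycle edges, not chords).

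Next I would split the cycle along this chord into the two cycles $u,v_0,v_1,\dots,v_i,u$ and $u,v_i,v_{i+1},\dots,v_k,u$. Each is a genuine cycle, since $uv_i$ together with the original edges $uv_0$ and $uv_k$ are all present; each has its path portion induced, being a subpath of the induced path $v_0,\dots,v_k$; and each is strictly shorter than the original precisely because $1\le i\le k-1$. Applying the inductive hypothesis to the first cycle gives $uv_j\in E(G)$ for $j\in\{1,\dots,i\}$, and to the second gives $uv_j\in E(G)$ for $j\in\{i+1,\dots,k\}$. Combining these covers every $j\in[k]$, completing the induction.

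The only point demanding care is the bookkeeping in the inductive step: one must check that both sub-cycles are strictly shorter and that their path parts are still induced, which is exactly why the chordality argument is tuned to produce an index in the range $1\le i\le k-1$. I do not expect any deeper obstacle; the conceptual heart is just the observation that $\ell(G)\le 3$ forces every cycle of length at least four to have a chord, and that the induced-path hypothesis pins each such chord onto $u$.
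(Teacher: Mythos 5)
Your proof is correct, but it takes a genuinely different route from the paper's. The paper argues by a one-step contradiction with an extremal choice: supposing $uv_j\notin E(G)$ for some $j$, it sets $a$ to be the largest index below $j$ with $uv_a\in E(G)$ and $b$ the smallest index above $j$ with $uv_b\in E(G)$ (both exist since $uv_0$ and $uv_k$ are cycle edges); by this choice $u$ has no neighbour strictly between $v_a$ and $v_b$ on the path, so $u,v_a,v_{a+1},\dots,v_b,u$ is an \emph{induced} cycle of length $b-a+2\ge 4$, directly contradicting $\ell(G)\le 3$. You instead run a strong induction on the cycle length, using only the weaker observation that a cycle of length at least four cannot be induced and hence carries a chord, that any such chord must be incident with $u$ because the path is induced, and then splitting along that chord into two strictly shorter cycles of the same form. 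Both arguments rest on the same two facts (chords avoid the induced path; $\ell(G)\le 3$ forbids long induced cycles), and in effect your recursion terminates precisely when the neighbourhood of $u$ along the path has no gaps, which is what the paper's choice of $a$ and $b$ pinpoints in a single stroke. What your version buys is that you never have to verify inducedness of any constructed cycle---chord existence suffices---at the cost of the inductive bookkeeping; the paper's version avoids induction entirely but needs the careful extremal choice to guarantee the constructed cycle is induced.
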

The following Lemma is the main result of this subsection. 
\begin{lemma}\label{shortcycles}
\setcounter{claimCount}{0}
Let $G$ be a (claw, bull)-free graph with $\ell(G)\le 3$ and $\alpha(G)\ge 3$. Then:
\begin{enumerate}
\item $diam(G)\ge 4$; and
\item $G$ is an expansion of a path.
\end{enumerate} 
\end{lemma}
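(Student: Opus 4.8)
The plan is to record the easy half of part (1), then establish part (2), and finally read the sharp diameter bound off the path structure rather than fight the delicate case $\mathrm{diam}(G)=3$ directly.

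First I would observe that $\ell(G)\le 3$ is equivalent to $G$ being chordal, since forbidding induced cycles of length at least $4$ is exactly what $\ell(G)\le 3$ asserts. For the diameter, note $\mathrm{diam}(G)\ge 2$ because $\alpha(G)\ge 3$ prevents $G$ from being complete, while $\mathrm{diam}(G)=2$ is impossible: by Lemma~\ref{lemma: diam-chordal} it would force $\ell(G)\ge 6$, contradicting $\ell(G)\le 3$. Hence $\mathrm{diam}(G)\ge 3$, and I will upgrade this to $\ge 4$ only after the structure is known.

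For part (2), fix a diametral geodesic $x_0,x_1,\dots,x_d$ with $d=\mathrm{diam}(G)\ge 3$; being a shortest path it is induced, and it serves as the backbone of the path whose expansion is $G$. The key local tool is Proposition~\ref{cycle-chordal}: if a vertex $v$ is adjacent to $x_i$ and $x_j$ with $i<j$, then $v,x_i,x_{i+1},\dots,x_j,v$ is a cycle whose interior is the induced path $x_i\cdots x_j$, so chordality forces $v\sim x_t$ for every $i\le t\le j$. Thus the set of backbone neighbours of each vertex $v\notin\{x_0,\dots,x_d\}$ is an \emph{interval} of the geodesic, whose length claw-freeness bounds (five consecutive backbone vertices would place an independent triple $x_i,x_{i+2},x_{i+4}$ in $N(v)$, a claw). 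I would then place each $x_i$ in a bag $B_i$ and each off-backbone vertex in the layer $\{v:d(x_0,v)=i,\ d(x_d,v)=d-i\}$ determined by its interval, using Lemma~\ref{lemma: structure cw-bl-free} applied to the edge $x_0x_1$ (and symmetrically to $x_{d-1}x_d$) to obtain, essentially for free, that the interior bags $B_2,\dots,B_{d-2}$ are cliques with complete bipartite joins between consecutive ones.

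It then remains to verify the expansion axioms: each $B_i$ is a clique, $B_i\Leftrightarrow B_{i+1}$, and there are no edges between non-consecutive bags (the last being automatic from the layering for interior vertices). Granting this, $G$ is the expansion of the path $B_0B_1\cdots B_d$; since an independent set meets each bag at most once and cannot meet two consecutive bags, $\alpha(G)=\lceil (d+1)/2\rceil$, so $\alpha(G)\ge 3$ forces $d+1\ge 5$, i.e. $\mathrm{diam}(G)=d\ge 4$, which settles part (1) as well. The main obstacle is the behaviour at the two ends together with the coverage of off-backbone vertices — precisely what Lemma~\ref{lemma: structure cw-bl-free} does not hand over. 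Concretely I must show that every vertex has a backbone neighbour (no vertex hangs far off the geodesic, otherwise claw-freeness at its attachment point, or a bull/long induced cycle, is violated) and that the extreme bags $B_0,B_1$ (resp. $B_{d-1},B_d$) are cliques completely joined to their successors. Here a nonedge $a\not\sim a'$ inside a would-be end bag, together with $x_0$ and the backbone vertex $x_1$, is driven to a claw or a bull by combining claw-freeness with Proposition~\ref{cycle-chordal}; this finicky end-bag bookkeeping, rather than the interior, is where the real work lies.
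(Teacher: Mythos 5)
Your outline follows the same skeleton as the paper's own proof: take a diametral geodesic $v_0,\dots,v_k$ (with $k\ge 3$ forced by Lemma~\ref{lemma: diam-chordal}), use Lemma~\ref{lemma: structure cw-bl-free} to make the layers $N_i$ into cliques completely joined consecutively, and use Proposition~\ref{cycle-chordal} to force consecutive (interval) attachment along the geodesic. But what you defer as ``finicky end-bag bookkeeping'' is in fact the bulk of the proof, and nothing in your sketch resolves it. Two separate statements are missing. First, \emph{coverage}: you must show every vertex lies either in some layer $N_i$ or in $U:=N_G(v_0)\setminus\{v_1\}$. Your proposed layering $\{v:\ d(v_0,v)=i,\ d(v,v_k)=k-i\}$ only captures vertices lying on some $v_0$--$v_k$ geodesic; the vertices that lie on none (e.g.\ a neighbour of $v_0$ at distance $k$ from $v_k$) are exactly the problematic ones. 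The paper proves coverage (its Claim~3) by a global extremal argument: among shortest paths from an alleged uncovered vertex to $v_k$, choose one minimizing the number of vertices of $U$ it uses, and derive a claw or a bull from path-exchange arguments. Second, the \emph{end structure}: the paper needs $U\Leftrightarrow\{v_1\}$ (its Claim~6), that $U$ is a clique (Claim~7), and that each $u\in U$ is either complete or anticomplete to $N_2$; Claim~6 is again a global extremal-path argument (a shortest path from $u$ to $v_k$ minimizing its intersection with $U$), not the local claw/bull check on a nonedge inside an end bag which is all you offer. Note also that $U$ is not one extra clique glued onto an end: its vertices split between the bag of $v_0$ and the bag of $v_1$ according to their adjacency to $N_2$, so ``the end bags are cliques joined to their successors'' is not even the right statement to aim for.

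There is also a problem with your order of deduction. You want to prove part (2) for every $k\ge 3$ and then read part (1) off the count $\alpha(G)=\lceil (k+1)/2\rceil$. But the structural arguments at the ends genuinely require $k\ge 4$: for instance, the bull showing that a vertex $u\in U$ cannot be adjacent to one vertex of $N_2$ and nonadjacent to another uses both $v_3$ and $v_4$, and the paper's Claim~5 shows that when $k=3$ a vertex of $U$ may be adjacent into $N_3$, so the expansion structure cannot be established at that end by these local arguments. This is why the paper proves $k\ge 4$ \emph{first} (its part (a)), via an independence-number count built on Claims 3, 6 and 7 (coverage, $U\Leftrightarrow\{v_1\}$, and $U$ a clique imply $U\cup N_0\cup N_1$ is a clique; together with consecutive layers being completely joined, $k=3$ would force $\alpha(G)\le 2$), and only then derives the expansion. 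Your plan would need either to replicate that count before proving part (2), or to supply $k=3$ arguments that do not exist in the paper.
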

\begin{proof}
\setcounter{claimCount}{0}
Let $k=diam(G)$. According to lemma \ref{lemma: diam-chordal} we have $k\ge 3$. Let $v_0,v_k\in V(G)$ such that $d_G(v_0,v_k)=k$, and let $P=v_0,\dots,v_k$ be a geodesic path between them. Moreover, let $U=N_G(u_0)\setminus \{u\}$, set $H=G-U$ and define $N_i$'s as in Lemma \ref{lemma: structure cw-bl-free}. Moreover, let $A=\{\alpha_1,\alpha_2,\alpha_3\}$ be an independent set of vertices where $\alpha_i$s are distinct.

\begin{claim}\label{claim: u-to-Ni} No vertex in $U$ is adjacent to $v_3$ or a vertex in any $N_i$ with $3<i\le k$. Moreover, a vertex of $U$ adjacent to a vertex in some $N_i$ is adjacent to every vertex in every $N_j$ with $j <i$.
\end{claim}
\begin{claimproof}
If the first part does not hold, then one has $d_G(v_0,v_k)<2+k-3<k$, a contradiction. As for the second part of the claim, consider a vertex $u\in U$ which is adjacent to a vertex $w_i\in N_i$ and for each $j\in\{0,\dots,i-1\}$ choose a vertex $w_j\in N_j$. Then, by the definition of the $N_j$s, $w_0,w_1,\dots,w_i$ is an induced path. Since $uw_0=uv_0,uw_i\in E(G)$, by Proposition \ref{cycle-chordal} it follows that every $uw_j$ is an edge of $G$. This establishes the second part of the claim.
\end{claimproof}

\begin{claim}\label{claim: Nk+1empty} $N_i=\varnothing$ for $i>k$.
\end{claim}
\begin{claimproof}
It suffices to show that $N_{k+1}=\varnothing$. To this end, by the way of contradiction suppose $N_{k+1}\not=\varnothing$ and choose a vertex $w_{k+1}\in N_{k+1}$. 
Let $Q$ be a geodesic path in $G$ from $w_{k+1}$ to $v_0$. Considering the fact that $d_H(w_{k+1},v_0)=k+1>k$, we conclude that $Q$ must contain exactly one vertex, say $u$, from $U$. As such, we must also have $u v_0\in E(Q)$, i.e. $uv_0$ must be the last edge of $Q$. Moreover, since $V(Q)\setminus \{u\}\Seq V(H)$, every vertex in $V(Q)\setminus \{u\}$ must be in some $N_j$. Suppose the vertex of $Q$ preceding $u$ is in $N_i$ and call it $w_i$.
\medskip

\Case{I}{$i> k$.}

Set $w_3=v_3$ and for each $j\in(([i-1]\cup\{0\})\setminus\{3\})$ choose $w_j\in N_j$. Note that as $i\>k\ge 3$, the induced path $w_0,w_1,\dots,w_i$ contains $v_3$. Moreover, since $uw_0=uv_0\in E(G)$ and $uw_i\in E(G)$, we must have $uw_j\in E(G)$ for each $j\in\{0,\dots,i\}$; in particular, $uv_3\in E(G)$. But the latter contradicts Claim \ref{claim: u-to-Ni}. Hence, this case does not happen.
\medskip

\Case{II}{$i\le k$.}

$Q$ will be of the form $ w_{k+1}w_k,\dots,w_i,u,v_0$ where each $w_j$ ($j\in\{i,i+1,\dots,k+1\}$) is in $N_j$. In particular the length of $Q$, which is bounded above by the diameter $k$ of $G$, is $k+3-i$. Hence, $i\ge 3$. On the other hand, by Claim \ref{claim: u-to-Ni}, we must have $i<4$ (since $u$ is not adjacent to $v_3\in N_3$). Therefore, $i=3$ and, hence, $uv_1,uv_2\in E(G)$, according to Claim \ref{claim: u-to-Ni}. Moreover, we have $uw_4\not\in E(G)$, by Claim \ref{claim: u-to-Ni}, whereas $v_2w_3,w_3w_4\in E(G)$ and $v_0v_2,v_0w_3,v_0w_4,v_2w_4\not\in E(G)$. Thus, $G[\{v_0,v_2,w_3,w_4,u\}]$ will be a bull, a contradiction.

\end{claimproof}
\begin{claim}\label{Ni+U-is-all} $V(G)=(\bigcup_1^k N_i)\cup U$.
\end{claim}
\begin{claimproof} Contrary to the claim, assume $(\bigcup_1^k N_i)\cup U\subsetneq V(G)$ or, equivalently, $W:=N(U)\setminus(\bigcup_1^k N_j)\not=\varnothing$. Let $\mathscr{R}$ be the set of paths of the shortest length from a vertex in $W$ to $v_k$. Note that every path in $\mathscr{R}$ has at least one vertex in common with $U$, for otherwise $w$ would be in $\bigcup_1^k N_j$, a contradiction. Choose $R\in\mathscr{R}$ such that $\card{V(R)\cap U}$ is the minimum. Furthermore, let $w$ be the initial vertex of $R$ and $u$ the last vertex of $R$ which is in $U$. Observe that every vertex of $R$ that follows $u$ is in some $N_j$ with $j\in[k]$ and, according to Claim \ref{claim: u-to-Ni}, the immediate successor of $R$ is in $\bigcup_1^3 N_j$. Let the latter be $w_i\in Ni$. Then, we must have
\begin{equation}
R(u,v_k)=\left\{\begin{array}{lcl}
u,w_{i},\dots,w_{k-1},v_k & \mbox{if $i<k-1$;}\\
u,w_{k-1},v_k & \mbox{if $i=k-1$;}\\
u,w_3,v_3 & \mbox{if $i=k=3$};
\end{array}\right.
\end{equation}
where each $w_j$ is in $N_j$ and $w_3\not=v_3$. (Recall that $uv_3\not\in E(G)$; thereby, in case $i=3$ we must have $w_3\not= v_3$.) As a result, the length of $R(u,v_k)$ is the $\max\{k-i+1,2\}$. Then. from the facts that
\begin{itemize}
\item $R$ has at least one edge more than $R(u,v_k)$,
\item length of $R$ is bounded above by the diameter $k$, and
\item $i\le 3$,
\end{itemize}
it follows that $i\in \{2,3\}$. In particular, $v_0 w_i\not\in E(G)$.

Consequently, if $i=2$ or $i=k=3$ then we must have $wu\in E(G)$ (for otherwise the length of $R$ would be grater than k); hence, $G[\{w,u,v_0,v_i\}]$ would be a claw, a contradiction. Also, $G[\{w,u,v_0,v_i\}]$ would be a claw if $i=3$, $k>3$ and $wu\in E(G)$. Hence, the only case to examine is when $i=3<k$ and $wu\not\in E(G)$. As such, that $R$ has length $\le k$ implies  
\begin{equation}
R=
\left\{\begin{array}{lcl}
w,u',u,w_{3},v_4 & \mbox{if $k=4$;}\\
w,u',u,w_{3},\dots,w_{k-1},v_k& \mbox{if $k>4$};\
\end{array}\right.
\end{equation}
\begin{figure}[h]
\begin{center}
 \includegraphics[scale=1.35]{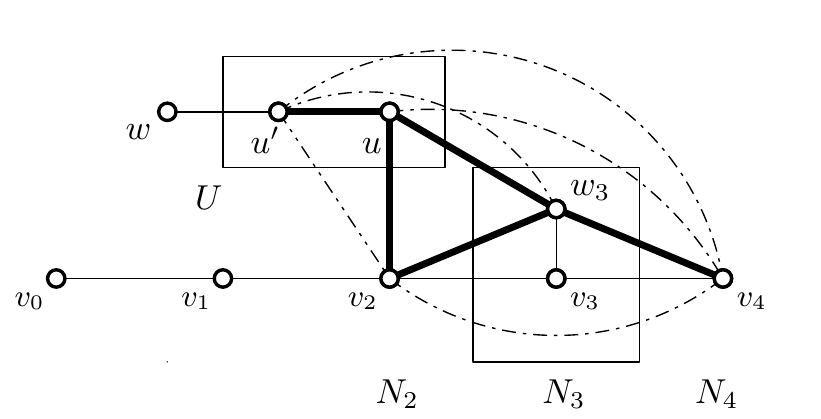}
 \caption{Ruling out the case $i=3<k$ and $wu\not\in E(G)$ in the proof of Claim \ref{Ni+U-is-all}, Lemma \ref{shortcycles}. With $R(w,w_i)=w,u',u,w_3$, $G[\{u'u,v_2,v_4,w_3\}]$ will be a bull.} \label{pic: c-b-free-lemma-last}
\end{center}
\end{figure}
Note that we must have $u'\in U$, for otherwise $R(u',v_k)$, would be in $\mathscr{R}$, contradicting the choice of $R$ as a path of the shortest length in $\mathscr{R}$. Likewise, we must have $u'w_3\not\in E(G)$, for otherwise $wu'+u'w_3+R(w_3,v_k)$ would be a path in $\mathscr{R}$ yet shorter than $R$. Furthermore, we must have $u'v_2\not\in E(G)$, for otherwise the path $R':=wu'+u'v_2+v_2w_3+R(w_3,v_k)$ would have the same length as $R$, implying $R'\in\mathscr{R}$, with the property that 
\begin{equation*}
\card{V(R')\cap U}<\card{V(R)\cap U},
\end{equation*}
contradicting the choice of $R$ as an element in $\mathscr{R}$ with minimum size intersection with $U$. But then $G[\{u'u,v_2,v_4,w_3\}]$ will be a bull, a contradiction.
\end{claimproof}



\begin{claim}\label{claim: u-to-N2-clique} Let $U'\subseteq U$ such that any two vertices in $U'$ have a common neighbor in $N_2$. Then $U'$ is a clique.
\end{claim}
\begin{claimproof}
According to Claim \ref{claim: u-to-Ni} no vertex in $U'$ is adjacent to $v_3$. Hence, for any pair $x,y$ of distinct vertices in $U'$ with $xy\not\in E(G)$, and for every common neighbor $w_2\in N_2$ of $x,y$ the graph $G[\{x,y,w_2,v_3\}]$ is a claw. Therefore, $U'$ must be a clique.
\end{claimproof}

\begin{claim}\label{claim: diam=3}
If there is a vertex $u\in U$ such that $N_G(u)\cap N_3\not=\varnothing$ then $diam(G)=3$.
\end{claim}
\begin{claimproof}
Consider any vertex $w_3\in N_G(u)\cap N_3$. According to Claim \ref{claim: u-to-Ni} we have $w_3\not=v_3$, and $uv_2\in E(G)$. If, in addition, $diam(G)\ge 4$ or, equivalently, if $N_4\not=\varnothing$, then $G[\{u,v_0,v_2,w_3,v_4\}]$ would a bull, a contradiction. Hence, Hence, we must have $diam(G)=3$.
\end{claimproof}

\begin{claim}\label{claim: u-tov1} $U\Leftrightarrow \{v_1\}$.
\end{claim}
\begin{claimproof}
Let $u\in U$ such that $uv_1\not\in E(G)$. Then, by Claim \ref{claim: u-to-Ni} $u$ is adjacent to no vertex in an $N_i$ with $i>0$; in other words, we have
\begin{equation}\label{eq: claim 6-01}
N_G(u)\Seq U\cup \{v_0\}.
\end{equation}
Let $\mathscr{Q}$ be the set of paths of the shortest length from $u$ to $v_k$. Note that every path $Q\in\mathscr{Q}$ has at least two vertices in $U$ (one of which is of course $u$), for otherwise one would have $uv_0\in Q$, implying that $l(Q)=l(Q(v_0,v_k))+1>k$, a contradiction. Hence,
\begin{equation}\label{eq: claim 6-02}
\card{V(Q)\cap U}\ge2\qquad\forall Q\in\mathscr{Q}.
\end{equation}
Choose $Q'\in\mathscr{Q}$ such that $\card{V(Q')\cap U}$ is the minimum and let $u'$ be the last vertex of $Q'$ which is in $U$. Note that
\begin{equation}\label{eq: claim 6-03}
l(Q'(u',v_k))\ge 
\left\{\begin{array}{lcl}
k-2 & \mbox{if $k>3$;}\\
k-1& \mbox{if $k=3$};\
\end{array}\right.
\end{equation}
where the second inequality follows from the fact that $u'v_3\not\in E(G)$. Note that $u'$ must be adjacent to some vertex $w_2\in N_2$, for otherwise one would have $l(Q')>k+1$, a contradiction. As such, we must have
\begin{equation}\label{eq: claim 6-04}
uu'\not\in E(G),
\end{equation}
for otherwise $G[\{u,u',v_1,v_3,w_2\}]$ would be a bull. (See Figure \ref{pic: c-b-free-lemma-last02}.) 
\begin{figure}[H]
\begin{center}
 \includegraphics[scale=1.35]{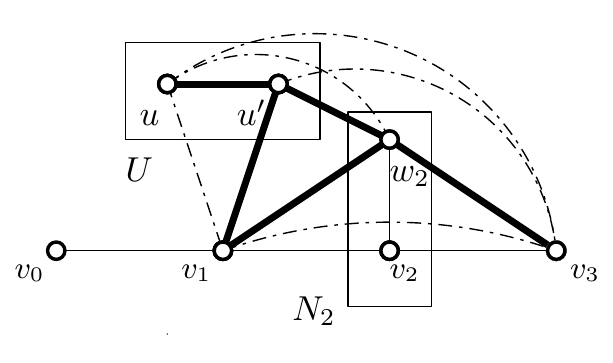}
 \caption{Ruling out the case that $\card{V(Q')\cap U}= 1$ in the proof of Claim \ref{claim: u-tov1}, Lemma \ref{shortcycles}. For every $w_2\in N_2 \cap N_G(u'')$, the graph $G[\{u,u',v_1,v_3,w_2\}]$ will be a bull.} \label{pic: c-b-free-lemma-last02}
\end{center}
\end{figure}

\noindent Thus, according to \ref{eq: claim 6-03} we must have $k>3$. Moreover, 
\begin{equation}\label{eq: claim 6-05}
\exists\;u''\in U:\quad Q'(u,u')=\;u,u'',u',
\end{equation}
and $u'$ is followed by a vertex $w_3\in N_3$ along $Q'$. Note that
\begin{equation}\label{eq: claim 6-05+1}
 u''w_3\not\in E(G),
\end{equation} for otherwise the path from $u$ to $v_k$ obtained by augmenting the path $u,u',w_3$ to $Q'(w_3,v_k)$ would be shorter than $Q'$, a contradiction. Moreover, as such, we must have
\begin{equation}\label{eq: claim 6-06}
u''v_2\not\in E(G),
\end{equation}
\begin{figure}[H]
\begin{center}
 \includegraphics[scale=1.35]{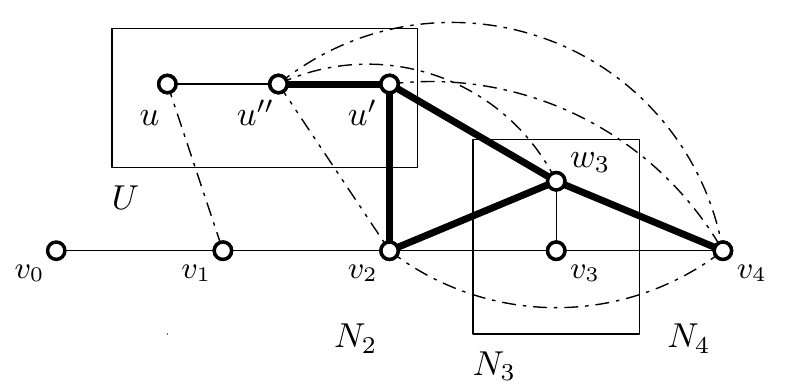}
 \caption{Ruling out the case that $\card{V(Q')\cap U}\ge 2$ in the proof of Claim \ref{claim: u-tov1}, Lemma \ref{shortcycles}. By \eqref{eq: claim 6-05+1} and \eqref{eq: claim 6-06} the graph $G[\{u',u'',v_2,v_4,w_3\}]$ will be a bull.} \label{pic: c-b-free-lemma-last03}
\end{center}
\end{figure}
\noindent for otherwise the path $Q''$ obtained by augmenting the path $u,u'',v_2,w_3$ to $Q(w_3,v_k)$ will have the same length as $Q'$ whereas
\begin{equation*}
\card{V(Q'')\cap U}< \card{V(Q')\cap U},
\end{equation*}
contradicting the choice of $Q'$. Finally, as shown in Figure \ref{pic: c-b-free-lemma-last03}, $G[\{u',u'',v_2,v_4,w_3\}]$ will be a bull, a contradiction. Hence, $U\Leftrightarrow \{v_1\}$, as desired.
\end{claimproof}
\begin{claim}\label{claim: U-clique} $U$ is a clique.
\end{claim}
\begin{claimproof}
Suppose, contrary to the claim, that $x,y$ are distinct vertices in $U$ such that $xy\not\in E(G)$. By Claim \ref{claim: u-tov1} we have
\begin{equation}
xv_1,yv_1\in E(G)
\end{equation}
Moreover, we have $xv_2\in E(G)$ or $yv_2\in E(G)$, for otherwise $G[\{x,yv_1,v_2\}]$ would be a claw. In addition, according to Claim \ref{claim: u-to-N2-clique}, $v_2$ cannot be adjacent to both $x$ and $y$.  Hence, we may assume
\begin{equation}
xv_2\not\in E(G)\quad \& \quad yv_2\in E(G).
\end{equation}
But then, $G[\{x,y,v_1,v_2,v_3\}]$ would be a bull, a contradiction. Hence, $U$ is a clique. 
\end{claimproof}

\noindent\textbf{(a)} Let $I$ be a largest independent set in $G$. By Claim \ref{claim: U-clique} we have $\card{I\cap U}\le 1$. Hence, by Lemma \ref{lemma: structure cw-bl-free}, Claim \ref{Ni+U-is-all} and that $\card{I}\ge 3$, we must have $k\ge 4$, as desired.\\
\textbf{(b)} As $k\ge4$ and according to Claims \ref{claim: u-to-Ni} and \ref{claim: diam=3}, no vertex in $U$ is adjacent to a vertex in any $N_i$ with $i\ge 3$. Note that by Claim \ref{claim: u-tov1}, we have $uv_1\in E(G)$ for every $u\in U$. We shall show that every vertex in $U$ is either adjacent to every vertex in $N_2$ or non-adjacent to every vertex in $N_2$. To this end, by the way of contradiction, let there be $u\in U$ and $s_2,t_2\in N_2$ such that $us_2\in E(G)$ and $ut_2\not\in E(G)$. Then $G[\{s_2,t_2,u,v_3,v_4\}]$ will be a bull, a contradiction. Therefore, $U$ is the disjoint union of the sets $V_0:=\{u\in U: \{u\}\Leftrightarrow N_2\}$ and $V_1:=\{u\in U: \nexists w\in N_2:\; uw\in E(G)\}$, and $G$ is the expansion of the path $v_0,\dots,v_k$ where each vertex $v_i$ is replaced by the bag $M_i$ defined by $M_i=N_i\cup V_i$ for $i=0,1$ and $M_i=N_i$ for $i>1$. 
\end{proof}
\begin{figure}[H]
\begin{center}
 \includegraphics[scale=1.35]{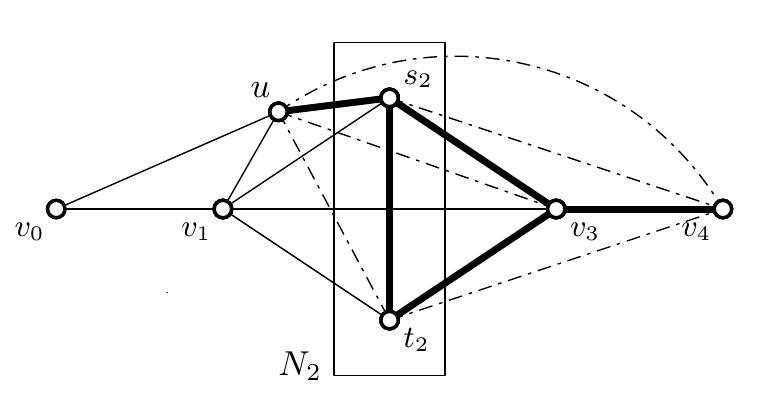}
 \caption{proof of part \textbf{(b)} of Lemma \ref{shortcycles}; showing that $V_0\cap V_1=\varnothing$. If $u\in U$ ,$s_2,t_2\in N_2$ with $us_2\in E(G)$ and $ut_2\not\in E(G)$, then $G[\{s_2,t_2,u,v_3,v_4\}]$ will be a bull} \label{pic: c-b-free-lemma-last04}
\end{center}
\end{figure}

\section{Proof of Theorem \ref{thm:main}}
	
		

\begin{proof}[Proof of Theorem \ref{thm:main}]
It is easy to check that an expansion of a path, that of a cycle, and the complement of a triangle-free graph are all (claw, bull)-free. Conversely, by Lemmas \ref{longcycles}, \ref{lemma: comp. tri.free}, and \ref{shortcycles}, every (claw, bull)-free graph  is either an expansion of a cycle of length $\ge 6$, or the complement of a triangle-free graph, or an expansion of a path of length $\ge 4$.
\end{proof}

\bibliographystyle{plain}






\end{document}